\newcommand{\la}{\llangle}
\newcommand{\ra}{\rrangle}
\newcommand{\Sol}{\mathbb{R} \times \mathrm{Sol}^3}
\newcommand{\tr}{\mathrm{tr}}
\let\c\overline
\theoremstyle{plain}
\newtheorem{theorem}{Theorem}[section]
\newtheorem*{theorem*}{Theorem}
\newtheorem*{theorem**}{Theorem \ref{thm-hyp}}
\newtheorem{definition}[theorem]{Definition}
\newtheorem{lemma}[theorem]{Lemma}
\newtheorem{proposition}[theorem]{Proposition}
\newtheorem{corollary}[theorem]{Corollary}
\newtheorem{remark}[theorem]{Remark}
\newtheorem{example}[theorem]{Example}
\newtheorem{question}[theorem]{Question}
\newtheorem*{question*}{Question}
\newtheorem*{mt*}{Main Theorem}
\newcommand\C{{\mathbb C}}
\newcommand\R{{\mathbb R}}
\newcommand\Z{{\mathbb Z}}
\newcommand{\de}[2]{\frac{\partial #1}{\partial #2}}
\newcommand\Span{{\hbox{\em Span}}}
\newcommand{\del}{{\partial}}
\newcommand{\delbar}{{\overline{\del}}}
\newcommand{\mubar}{\overline{\mu}}
\newcommand{\cinf}{\mathcal{C}^\infty}
\renewcommand{\H}{\mathcal{H}}
\DeclareMathOperator{\im}{im}
\DeclareMathOperator{\GL}{GL}
\DeclareMathOperator{\id}{id}
\let\c\overline
\let\phi\varphi
\let\Phi\varphi
\newcommand{\Cpf}{$\mathcal{C}^\infty$-pure-and-full}
\newcommand{\Cf}{$\mathcal{C}^\infty$-full}
\newcommand{\Cp}{$\mathcal{C}^\infty$-pure}
\title[Invariants of almost complex and almost K\"ahler manifolds]{Invariants of almost complex and almost K\"ahler manifolds}
\author{Tom Holt}
\address{Dipartimento di Scienze Matematiche, Fisiche e Informatiche\\
Unit\`{a} di Matematica e Informatica\\
Universit\`{a} degli Studi di Parma\\
Parco Area delle Scienze 53/A \\
43124 Parma, Italy}
\email{thomascharles.holt@unipr.it}
\author{Riccardo Piovani}
\email{riccardo.piovani@unipr.it}
\author{Adriano Tomassini}
\email{adriano.tomassini@unipr.it}
\keywords{\Cp\, \Cf\, $(p,q)$ Hodge-de Rham harmonic forms}
\thanks{\newline 
The second author is partially supported by GNSAGA of INdAM.
The third author is partially supported by the Project PRIN 2017 ``Real and Complex Manifolds: Topology, Geometry and holomorphic dynamics'' and by GNSAGA of INdAM}
\subjclass[2020]{32Q60; 53C15; 58A14}
\begin{document}


\begin{abstract} 
Let $(M^{2n},J)$ be a compact almost complex manifold.
The almost complex invariant $h^{p,q}_J$ is defined as the complex dimension of the cohomology space $\left\{\left[\alpha\right]\in H^{p+q}_{dR}(M^{2n};\C) \,\vert\,\alpha\in A^{p,q}(M^{2n}),\, d\alpha = 0 \right\}$. When $2n=4$, it has many interesting properties. Endow $(M^{2n},J)$ with an almost Hermitian metric $g$. The number $h^{p,q}_d$, i.e., the complex dimension of the space of Hodge-de Rham harmonic $(p,q)$-forms, is almost K\"ahler invariant when $2n=4$. In this paper we study the relationship between $h^{p,q}_J$ and $h^{p,q}_d$ in dimension $2n\ge4$. We prove $h^{n,0}_J=0$ if $J$ is non integrable and show that $h^{p,0}_d$ is almost K\"ahler invariant. If $M^{2n}$ is a compact quotient of a completely solvable Lie group and $(J,g,\omega)$ is left invariant, we find information also on $h^{1,1}_d$. Finally we study the \Cp\ and \Cf\ properties of $J$ on $n$-forms for the special dimension $2n=4m$.
\end{abstract}

\maketitle

\section{Introduction}\label{sec intro}
An almost complex manifold is a pair $(M,J)$,  where $M$ is a smooth $2n$-dimensional manifold and $J$ is a smooth $(1,1)$-tensor field satisfying $J^2=-\id$. If the Nijenhuis tensor of $J$ vanishes then the almost complex structure $J$ is said to be integrable and $(M,J)$ has the structure of a complex manifold by the Newlander-Nirenberg Theorem. For a complex manifold $(M,J)$,  the exterior derivative $d$ splits as $d=\del+\delbar$, where $\del^2=0$, $\delbar^2=0$ and $\del\delbar+\delbar\del =0$, thus allowing us to consider the Dolbeault cohomology groups defined as $H_\delbar^{\bullet,\bullet}(M)=\frac{\ker \delbar}{\im \delbar}$. Therefore, their dimensions $h^{p,q}$ are complex invariants. 

An almost Hermitian metric on an almost complex manifold $(M,J)$ is a Riemannian metric $g$ such that $J$ acts as an isometry, also called $J$-Hermitian, and its fundamental form $\omega$ is defined as $\omega(X,Y)=g(JX,Y)$.
An (almost) Hermitian manifold is a quadruple $(M,J,g,\omega)$ where $(M,J)$ is an (almost) complex manifold and  $g$ is a $J$-Hermitian metric with fundamental form $\omega$; if $d\omega=0$, the metric is called (almost) K\"ahler and $(M,J,g,\omega)$ is an (almost) K\"ahler manifold.

Compact K\"ahler manifolds satisfy many remarkable metric and cohomological properties: indeed, as a consequence of the K\"ahler identites, the  Hodge-de Rham Laplacian $\Delta_d=dd^*+d^*d$, the $\del$-Laplacian $\Delta_{\del}=\del\del^*+\del^*\del$, and the $\delbar$-Laplacian $\Delta_{\delbar}=\delbar\,\delbar^*+\delbar^*\delbar$ satisfy 
$$
\Delta_d=2\Delta_\del=2\Delta_\delbar
$$
so, by denoting 
$$
\mathcal{H}^{\bullet,\bullet}_{d}=\ker \Delta_d\vert_{A^{\bullet,\bullet}(M)},
\quad \mathcal{H}^{\bullet,\bullet}_{\del}=\ker \Delta_{\del}\vert_{A^{\bullet,\bullet}(M)},\quad
\mathcal{H}^{\bullet,\bullet}_{\delbar}=\ker \Delta_{\delbar}\vert_{A^{\bullet,\bullet}(M)},
$$
it holds that $\mathcal{H}^{\bullet,\bullet}_{d}=\mathcal{H}^{\bullet,\bullet}_{\del}=\mathcal{H}^{\bullet,\bullet}_{\delbar}$. Recall that they have finite dimension since they are the kernels of self adjoint elliptic differential operators. Furthermore, via the Hodge isomorphism $\mathcal{H}^{\bullet,\bullet}_{\delbar}\simeq H^{\bullet,\bullet}_\delbar(M)$, the complex de Rham $k$-cohomology groups decompose as the direct sum of Dolbeault cohomology groups, that is 
$$
H^{k}_{dR}(M;\C)\simeq \bigoplus_{p+q=k}H^{p,q}_\delbar (M).
$$
In particular, $b^{k}=\displaystyle\sum_{p+q=k}h^{p,q}$. 

For a compact almost Hermitian manifold $(M,J,g,\omega)$ the $\delbar$-Laplacian $\Delta_{\delbar}$ can still be defined and it turns out that it is still an elliptic differential operator. In \cite[Problem 20]{Hi}, Kodaira and Spencer asked the following: given a non integrable almost complex structure $J$ on a compact manifold, choose an almost Hermitian metric $g$ and set $h^{p,q}_\delbar=\dim_\C\mathcal{H}^{p,q}_{\delbar}$. Do the  numbers $h^{p,q}_\delbar=\dim_\C\mathcal{H}^{p,q}_{\delbar}$ depend on the almost Hermitian metric $g$?

Very recently, the first author and W. Zhang \cite{HZ} proved that $h^{0,1}_\delbar$ can vary with different choices of the almost Hermitian metric on the Kodaira-Thurston manifold. Therefore, $h^{p,q}_\delbar$ is not an almost complex invariant. More precisely, $h^{p,q}_\delbar$ is not even almost K\"ahler invariant, meaning that it can assume different values for different almost K\"ahler metrics \cite{HZ2}. For other results concerning the study of $h^{p,q}_\delbar$ see \cite{TT, Ho, PT5, zhang-parma, P, tardini-tomassini-dim6} and the references therein. See also \cite{cattaneo-tardini-tomassini, piovani-tardini, HP} for the study of the relation between the spaces of harmonic forms and the primitive decomposition on compact almost K\"ahler manifolds.

In the present paper, given an almost complex manifold $(M,J)$, we are interested in studying almost complex invariants and objects which are almost K\"ahler (metric) invariant, i.e., objects which are independent of the choice of almost K\"ahler metrics on $(M,J)$.

Draghici, Li and Zhang \cite{DLZ}, starting with a $2n$-dimensional compact almost complex manifold $(M,J)$, defined the spaces
$$
H^{p,q}_J(M)= \left\{\left[\alpha\right]\in H^{p+q}_{dR}(M;\C) \,\,\,\vert\,\,\,
\alpha\in A^{p,q}(M),\, d\alpha = 0 \right\}.
$$
and the numbers $h^{p,q}_J=\dim_{\C} H^{p,q}_J(M)$, which are almost complex invariants. In Theorem \ref{n0}, by making use of the unique continuation theorem for harmonic differential forms, we prove that $h^{n,0}_J=0$ if $J$ is non integrable. This extends \cite[Lemma 2.12]{DLZ} to the $2n$-dimensional case.

A natural candidate to be almost K\"ahler invariant is
\[
h^{p,q}_d:=\dim_\C \mathcal{H}^{p,q}_d,
\]
which is bounded from above by
\begin{equation}\label{eq-ineq-intro}
h^{p,q}_d\le h^{p,q}_J.
\end{equation}
Note that in the integrable case the numbers $h^{p,q}_d$ do not depend on the choice of the K\"ahler metric by Hodge theory.
Recently, by combining \cite[Lemma 2.12]{DLZ} and \cite[Proposition 6.1]{HZ}, it was proven in \cite[Corollary 5.9]{CW} that, given any compact almost K\"ahler $4$-dimensional manifold, $h^{p,q}_d$ is almost K\"ahler invariant.
As a first result we prove that, given any compact $2n$-dimensional almost K\"ahler manifold, the numbers $h^{p,0}_d$ are almost K\"ahler invariant since $h^{p,0}_d=h^{p,0}_J$.
Furthermore, by \eqref{eq-ineq-intro} the value of $h^{p,0}_d$ obtained when the metric is almost K\"ahler is maximal over all almost Hermitian metrics.

In Corollary \ref{cor-nilm} we also prove that $h^{1,1}_d=h^{1,1}_J$ on any $2n$-dimensional compact quotient of a completely solvable Lie group endowed with a left invariant almost K\"ahler metric. In particular $h^{1,1}_d$ is invariant of left invariant almost K\"ahler metrics in this setting, and by \eqref{eq-ineq-intro} it is maximal over all almost Hermitian metrics. Furthermore, dropping the almost K\"ahler assumption and restricting to dimension 4 in the same setting as above, in Theorem \ref{thm-gaud-bc-inj} we obtain that $h^{1,1}_J\in\{b^-,b^-+1\}$, and in Corollary \ref{cor-compl-solv-non-kahl} we characterise both cases for the value of $h^{1,1}_J$. Recall that $b^-$ is the real dimension of the space of anti self dual harmonic $2$-forms.

In a private communication W. Zhang asked whether, in dimension $4$, $h^{1,0}_d$ is an almost complex invariant or may depend on the almost Hermitian metric. 
In Theorem \ref{thm-10d} we show that $h^{1,0}_d$ is not an almost complex invariant. In order to prove this, we study $h^{1,0}_d$ on 4-dimensional non K\"ahler solvmanifolds. Specifically, we find examples for which the value of $h^{1,0}_d$ is fixed for any choice of left invariant almost Hermitian metric, but differs for some non left invariant almost Hermitian metrics. 

The main results of the paper can be summarised in the following Theorem.
\begin{theorem}
Let $(M^{2n},J,g,\omega)$ be a compact almost Hermitian $2n$-manifold. Then:
\begin{itemize}
\item $h^{n,0}_J=0$;
\item if $g$ is almost K\"ahler, then $h^{p,0}_d=h^{p,0}_J$;
\item if $M^{2n}=\Gamma\backslash G$ is a compact quotient of a completely solvable Lie group $G$ and $(J,g,\omega)$ is left invariant:
\begin{itemize}
\item if $g$ is almost K\"ahler, then $h^{1,1}_d=h^{1,1}_J\le h^{n-1,n-1}_J$;
\item if $2n=4$ and $g$ is almost K\"ahler, then $h^{1,1}_J=b^-+1$;
\item if $2n=4$, $h^{1,1}_J\in\{b^-,b^-+1\}$;
\item if $2n=4$, and given any closed $(1,1)$-form $c\omega+\gamma$ with $0\ne c\in\C$ and $*\gamma=-\gamma$, then $h^{1,1}_J=b^-$ iff $c\omega+\gamma$ is exact.
\end{itemize} 
\end{itemize}
\end{theorem}

In Section \ref{sec examples} we include computations of harmonic $(p,q)$-forms on a six dimensional nilmanifold endowed with a left invariant almost Hermitian structure. By direct computations and general results we prove that in the example $h^{p,q}_d$ does not depend on the choice of the left invariant almost K\"ahler metric, and the value of $h^{p,q}_d$ given by almost K\"ahler metrics is maximal over all almost Hermitian metrics even if $h^{2,1}_d<h^{2,1}_J$. We would like to remark that, by making use of an automated computer program, we were able to construct a very large number of examples of nilmanifolds which all show similar behaviour. This suggests that there is a chance that $h^{p,q}_d$ is actually almost K\"ahler invariant, at least for some special class of compact almost complex manifolds. We state the following question in the more general case, but remark that even an answer for the simpler case of nilmanifolds with left invariant almost K\"ahler metrics would be interesting.
\begin{question}
Let $(M,J,g,\omega)$ be a compact almost K\"ahler manifold.\\
Is $h^{p,q}_d(M,J,g,\omega)$ independent of the choice of the almost K\"ahler metric?\\ If so, is this value of $h^{p,q}_d$ maximal over all almost Hermitian metrics?
\end{question}

Finally, in \cite{DLZ}, Draghici, Li and Zhang introduced and studied \Cp\ and \Cf\ almost complex structures. See section \ref{sec cohomology} for the definitions. In particular, they proved that any almost complex structure on a compact $4$-dimensional manifold is \Cpf.
A natural generalisation of their notion of \Cp\ and \Cf\ to the higher dimension is the notion of \Cp\ and \Cf\ in degree $2n$ on compact almost Hermitian manifolds of dimension $4n$. Indeed, on $2n$-forms both the almost complex structure and the Hodge $*$ operator act as involutions, i.e., $J^2=*^2=1$, yielding decompositions of the space of $2n$-forms into the $\pm1$ eigenspaces of both $J$ and $*$. This looks promising for attempts to generalise the results of \cite{DLZ}. In this paper we find an 8-dimensional nilmanifold which is not \Cp\ nor \Cf\ in degree $4$, see Example \ref{example fino tomassini}. However, on any $4n$-dimensional compact almost Hermitian manifold, if we restrict to the cohomology spaces
\[
H^{2n}_{g\pm}:=\{a\in H^{2n}_{dR}\,|\,a=[\alpha], *\alpha=\pm\alpha\},
\]
it holds that
\[
H^{2n}_{dR}=H^{2n}_{g+}\oplus H^{2n}_{g-}
\]
and we are able to prove that $J$ is \Cp\ in degree $2n$ when restricted to the two spaces $H^{2n}_{g+}$ and $H^{2n}_{g-}$ (see Theorem \ref{thm 4n pure}).

The structure of this paper is as follows. In section \ref{sec preliminaries} we recall some preliminary definitions and results involving almost complex manifolds and harmonic forms. In section \ref{sec cohomology} we give the definitions of \Cp\ and \Cf\ almost complex structures following \cite{DLZ}. In section \ref{sec 4 dim alm cplx inv} we study the almost complex invariance of $h^{p,q}_d$ in dimension 4.
In section \ref{sec-invariants-p0} we study the numbers $h^{p,0}_d$ in higher dimension. In section \ref{sec quotient lie 11} we study the relationship between $h^{1,1}_d$ and $h^{1,1}_J$ on compact quotients of Lie groups. In section \ref{sec examples}, through explicit examples, we prove that the inequality \eqref{eq-ineq-intro} can be strict, and show that for the equality $h^{1,1}_d=h^{1,1}_J$ to hold on 4-nilmanifolds, the assumption that the almost K\"ahler structure is left invariant is necessary. Finally, in section \ref{sec 4n} we study the properties on an almost complex structure of being \Cp\ or \Cf\ in degree $2n$ on a compact almost Hermitian manifold of dimension $4n$.

\medskip\medskip
\noindent{\em Acknowledgments.}
The authors are sincerely grateful to Weiyi Zhang and Lorenzo Sillari for useful discussions.

\section{Preliminaries}\label{sec preliminaries}
Let $\left(M,J\right)$ be an almost complex compact $2n$-manifold. Then the almost complex structure $J$ can be extended to an endomorphism of $TM^{\C}=TM\otimes\C$, still denoted by $J$, so that $J$ has eigenvalues $+i$ and $-i$, with the corresponding eigenbundles denoted, as usual, by $T^{1,0}M$ and $T^{0,1}M$, respectively.
$$
T^{1,0}M=\{X-iJX\,\,\,\vert\,\,\, X\in TM\},
$$
$$
T^{0,1}M=\{X+iJX\,\,\,\vert\,\,\, X\in TM\}.
$$
Accordingly, $J$ induces a decomposition of the bundle of complex forms into the direct sum of the bundle of $(p,q)$-forms, namely 
$$\Lambda^{r}M\otimes\C=\bigoplus_{p+q=r}\Lambda^{p,q}M,
$$
where 
$$\Lambda^{p,q}M=\Lambda^pT^{1,0}M\wedge\Lambda^qT^{0,1}M$$ 
is the bundle of $(p,q)$-{\em forms} on $M$. We will denote by $A^r_{\C}(M)=\Gamma(M,\Lambda^{r}M\otimes\C)$ the space of smooth sections of $\Lambda^{r}M\otimes\C$, i.e., the space of {\em complex r-forms} and by $A^{p,q}(M)=\Gamma(M,\Lambda^{p,q}M)$ the space of smooth sections of $\Lambda^{p,q}M$, i.e., the space of $(p,q)$-{\em forms} on $M$. The exterior derivative $d_{\vert A^{p,q}}$ decomposes as
$$
d=\mu+\del+\delbar +\overline{\mu}, 
$$ 
where 
$$\del=\pi^{p+1,q}\circ d_{\vert A^{p,q}},\qquad \delbar=\pi^{p,q+1}\circ d_{\vert A^{p,q}}$$ 
are first order differential operators and 
$$\mu=\pi^{p+2,q-1}\circ d_{\vert A^{p,q}},\qquad \overline{\mu}=\pi^{p-1,q+2}\circ d_{\vert A^{p,q}}
$$ 
are zeroth order operators, $\pi^{p,q}:A^{p+q}_{\C}(M)\to A^{p,q}(M)$ here denotes the natural projection.
$J$ is called \emph{integrable} when the almost complex manifold $(M,J)$ is induced by the structure of a complex manifold, and this occurs iff $\mubar\equiv0$.

Let $g$ be an almost Hermitian metric on $(M,J)$, that is a Riemannian metric on $M$ such that $J$ acts as a $g$-isometry. We will denote by $\omega$ the {\em fundamental form} of $g$, i.e., the real $2$-form on $M$ defined as 
$$
\omega(X,Y)=g(JX,Y),
$$ 
for any pair of vector fields $X$, $Y$ on $M$.

If $*:A^{p,q}M\to A^{n-q,n-p}M$ denotes the complex linear Hodge operator, one can consider the following second order elliptic differential operators defined respectively as
\begin{align*}
\Delta_d=&d d^*+d^*d,\\
\Delta_{\delbar}=& \delbar\,\delbar^*+\delbar^*\delbar.
\end{align*}
We denote by
\begin{align*}
\mathcal{H}^{p,q}_d=&\{\alpha\in A^{p,q}(M)\,:\, \Delta_d\alpha=0\},\\
\mathcal{H}^{p,q}_{\delbar}=&\{\alpha\in A^{p,q}(M)\,:\, \Delta_{\delbar}\alpha=0\}
\end{align*}
respectively, the spaces of $d$ and $\delbar$ harmonic $(p,q)$-forms.
If $M$ is compact, $\mathcal{H}^{p,q}_d$ and  $\mathcal{H}^{p,q}_{\delbar}$ are complex finite dimensional vector spaces and 
\begin{eqnarray*}
\alpha\in\mathcal{H}^{p,q}_d&\iff &
\left\{
\begin{array}{l}
d\alpha=0,\\
d*\alpha=0,
\end{array}
\right.
\\
\alpha\in\mathcal{H}^{p,q}_{\delbar}&\iff &
\left\{
\begin{array}{l}
\delbar\alpha=0,\\
\del*\alpha=0.
\end{array}
\right.
\end{eqnarray*}

Note that the following equalities hold for the space of $d$-harmonic $(p,q)$-forms.
\begin{equation}\label{eq conj harm}
\overline{\H^{p,q}_d}=\H^{q,p}_d
\end{equation}
\begin{equation}\label{eq star harm}
*\H^{p,q}_d=\H^{n-q,n-p}_d
\end{equation}
They are induced, respectively, by complex conjugation and the Hodge $*$ operator.
This implies the following chain of equalities for the dimension $h^{p,q}_d$:
\begin{equation}\label{eq numbers}
h^{p,q}_d=h^{q,p}_d=h^{n-p,n-q}_d=h^{n-q,n-p}_d.
\end{equation}

In \cite{PT4} the second and third author introduced the following fourth order elliptic and formally self adjoint operator on an almost Hermitian manifold
\begin{equation*}
\tilde\Delta_{BC}=
\del\delbar\delbar^*\del^*+
\delbar^*\del^*\del\delbar+\del^*\delbar\delbar^*\del+\delbar^*\del\del^*\delbar
+\del^*\del+\delbar^*\delbar,
\end{equation*}
which is called  \emph{Bott-Chern Laplacian} as in the integrable case.
If $M$ is compact, then for all $\alpha\in A^{p,q}$ it holds that
\begin{equation*}
\tilde\Delta_{BC}\alpha=0\ \iff\  \del\alpha=0,\ \delbar\alpha=0,\ \del\delbar*\alpha=0,
\end{equation*}
and this characterises the space of Bott-Chern harmonic $(p,q)$-forms 
\[
\mathcal{H}^{p,q}_{BC}=\{\alpha\in A^{p,q}(M)\,:\, \tilde\Delta_{BC}\alpha=0\}.
\]

We end this section by recalling the notion of primitive forms.
We denote by
$$
L:\Lambda^kM\to\Lambda^{k+2}M\,,\quad \alpha\mapsto\omega\wedge\alpha
$$
the Lefschetz operator and by
$$
\Lambda:\Lambda^kM\to\Lambda^{k-2}M\,,\quad \Lambda=*^{-1}L*
$$
its adjoint.
We recall that the map $L^h:\Lambda^kM\to\Lambda^{k+2h}M$ is injective for $h+k\le n$ and is surjective for $h+k\ge n$. 
A differential $k$-form $\alpha$ on $M$, for $k\leq n$, is said to be {\em primitive} if $\Lambda\alpha=0$, or equivalently if 
$$
L^{n-k+1}\alpha=0.
$$
 Then we have the following vector bundle decomposition (see e.g., \cite[p. 26, Th\'eor\`eme 3]{weil})
\begin{equation*}\label{eq-prim-dec-forms}
\Lambda^kM=\bigoplus_{r\geq\max(k-n,0)}L^r(P^{k-2r}M),
\end{equation*}
where we use
$$
P^{s}M:=\ker\big(\Lambda:\Lambda^{s}M\to\Lambda^{s-2}M\big)
$$
to denote the bundle of primitive $s$-forms. 
For any given $\beta\in P^kM$, we have the following formula (cf. \cite[p. 23, Th\'eor\`eme 2]{weil}) involving the Hodge $*$ operator and the Lefschetz operator
\begin{equation}\label{eq-*-primitive}
*L^r\beta=(-1)^{\frac{k(k+1)}{2}}\frac{r!}{(n-k-r)!}L^{n-k-r}J\beta.
\end{equation}

Furthermore, the decomposition above is compatible with the bidegree decomposition on the bundle of complex $k$-forms $\Lambda_\C^kM$ induced by $J$, that is 
$$
P^k M \otimes \C=\bigoplus_{p+q=k}P^{p,q}M,
$$
where 
$$
P^{p,q}M= \left(P^k M \otimes \C \right) \cap\Lambda^{p,q}M.
$$
In fact, we have
\begin{equation}\label{eq-prim-dec-forms-2}
\Lambda^{p,q}M=\bigoplus_{r\geq\max(p+q-n,0)}L^r(P^{p-r,q-r}M).
\end{equation}
Finally, let us set $P^s:=\Gamma(M,P^sM)$ and $P^{p,q}:=\Gamma(M,P^{p,q}M)$.

\section{Complex cohomology groups of almost complex manifolds}\label{sec cohomology}
Let $(M,J)$ be an almost complex manifold of dimension $2n$.
Following T.-J. Li and W. Zhang \cite[Section 2]{LZ} and T. Draghici, T.-J. Li and W. Zhang \cite{DLZ}, we recall the notion of de Rham cohomology with pure-type representatives. 
Define the cohomology space
$$
H^{p,q}_J(M)= \left\{\left[\alpha\right]\in H^{p+q}_{dR}(M;\C) \,\,\,\vert\,\,\,
\alpha\in A^{p,q}(M), d\alpha =0 \right\}
$$
and denote its complex dimension by
$$
h^{p,q}_J=\dim_{\C} H^{p,q}_J(M),
$$
which is always trivially bounded from above by $b^{p+q}$.
Given an almost Hermitian metric, the injection of the space of harmonic $(p,q)$-forms $\H^{p,q}_d$ into the cohomology spaces $H^{p,q}_J$ provides a metric dependent lower bound for $h^{p,q}_J$.

\begin{lemma}\label{lemma-injection}
Let $(M,J,g,\omega)$ be a compact almost Hermitian manifold. The map
\begin{align*}
j:&\H^{p,q}_d\hookrightarrow H^{p,q}_J\\
&\alpha\mapsto[\alpha]_{dR}
\end{align*}
is injective and thus $h^{p,q}_d\le h^{p,q}_J$.
\end{lemma}
\begin{proof}
Since harmonic and exact forms are orthogonal, note that if $\alpha=d\gamma$, then $\alpha=0$. Thus $j$ is injective.
\end{proof}

In Example \ref{ex fino tomassini} we will build a left invariant almost K\"ahler structure on a 6-dimensional nilmanifold where the previous inequality is strict for the bidegree $({2,1})$.

We will now define what it means for an almost complex structure to be \Cp\ and \Cf\ in degree $2k$. Note that an almost complex structure $J$ satisfies $J^2=(-1)^r$ on $r$-forms. When $r=2k$, this means that $J$ induces a decomposition of $A^{2k}(M)$ into its two $\pm1$-eigenbundles. 
We set
\[
H^{2k}_{J+}(M)=\{[\alpha]\in H^{2k}_{dR}(M;\R)\,|\,J\alpha=\alpha\}
\]
and
\[
H^{2k}_{J-}(M)=\{[\alpha]\in H^{2k}_{dR}(M;\R)\,|\,J\alpha=-\alpha\}.
\]
Let $M$ be a $2n$-dimensional manifold.
Now we give the following definition.
\begin{definition}
An almost complex structure $J$ on $M$ is said to be 
\begin{itemize}
 \item \emph{\Cp\ in degree $2k$} if
$$ H^{2k}_{J+}(M)\;\cap\; H^{2k}_{J-}(M) \;=\; \left\{\left[0\right]\right\} \;; $$
 \item \emph{\Cf\ in degree $2k$}  if
$$
H^{2k}_{J+}(M)\;+\; H^{2k}_{J-}(M) \;=\; H^{2k}_{dR}(M;\R) \;;
$$
 \item \emph{\Cpf\ in degree $2k$} if it is both \Cp\ and \Cf\ in degree $2k$, i.e., if the following decomposition holds:
$$
H^{2k}_{J+}(M)\,\oplus\, H^{2k}_{J-}(M) \,=\, H^{2k}_{dR}(M;\R) \;.
$$
\end{itemize}
\end{definition}

\begin{remark}
In practice, the above definition means that the elements $[\alpha] \in H^{2k}_{J+}(M)$ and $[\beta] \in H^{2k}_{J-}(M)$ are exactly the cohomology classes that can be represented by the real valued $2k$-forms
$$\alpha \in A^{2k}(M) \cap \bigoplus_{j \in 2\mathbb Z}A^{k+j,k-j}(M), $$
$$\beta \in A^{2k}(M) \cap \bigoplus_{j \in 2\mathbb Z +1}A^{k+j,k-j}(M). $$
\end{remark}

When $2k=2$, we will simply say \Cp, \Cf{} and \Cpf.
Draghici, Li and Zhang proved in \cite{DLZ} that every almost complex structure on a compact 4-manifold is \Cpf. In this case we set 
\[
h^+_J:=\dim_\R H^{2}_{J+}(M),\ \ \ h^-_J:=\dim_\R H^{2}_{J-}(M).
\]
Note that $h^+_J=h^{1,1}_J$ and $b^2=h^+_J+h^-_J$.

\section{$h^{p,q}_d$ on $4$-dimensional compact almost complex manifolds}\label{sec 4 dim alm cplx inv}

Cirici and Wilson \cite[Corollary 5.9]{CW} proved that the numbers $h^{p,q}_d$ are almost K\"ahler invariant for compact almost complex $4$-manifolds, i.e., they do not change with different choices of almost K\"ahler metrics on a given compact almost complex 4-manifold.

On the other hand, Weiyi Zhang brought to our attention the question of the almost complex invariance of the numbers $h^{p,q}_d$ in dimension 4. 
On a compact almost Hermitian 4-manifold, by \eqref{eq numbers}, the only numbers $h^{p,q}_d$ that we have to study are
\[
h^{1,0}_d,\ \ \ h^{2,0}_d,\ \ \ h^{1,1}_d.
\]
Let us recall what is already known about the almost complex invariance of these numbers. 
 
 By \cite[Corollary 4.5]{PT4} or \cite[Theorem 3.4]{Ho} we know that $h^{1,1}_d=b^-+1$ if and only if the metric is globally conformal to an almost K\"ahler metric, otherwise $h^{1,1}_d=b^-$. 
Therefore $h^{1,1}_d$ is almost K\"ahler invariant, but it is not an almost complex invariant. More generally, note that $h^{1,1}_d$ is a symplectic invariant.
 
On the other hand, taking into account that $\H^{2,0}_d$ injects into $H^{2,0}_J$, by \cite[Lemma 2.12]{DLZ} we know that $\H^{2,0}_d$ is zero when the almost complex structure is not integrable, and by \cite[Lemma 2.12, Proposition 2.17]{DLZ} the space $\H^{2,0}_d$ is just $\H^{2,0}_{\delbar}$ when the almost complex structure is integrable. In both cases, note that $h^{2,0}_d$ is an almost complex invariant.

In this section we study the behaviour of $h^{1,0}_{d}$ on compact almost Hermitian 4-manifolds, thus giving a complete picture of the almost complex invariance of $h^{p,q}_d$ in dimension 4. In fact, in this section we will give examples proving the following theorem. 
\begin{theorem}\label{thm-10d}
The number $h^{1,0}_d$ is not an almost complex invariant on compact 4-dimensional manifolds, nor is it even a complex invariant.
\end{theorem}
In particular, we focus on the case of compact non K\"ahler solvmanifolds in dimension 4 as listed in \cite[Table 6.1]{B}, giving a full account of the possible values of $h^{1,0}_d$ in all cases.

We have the following simple upper bound on $h^{1,0}_d$.
\begin{lemma}\label{lemma 1,0}
On any compact almost Hermitian manifold
$$h^{1,0}_d \leq \frac{b^1}{2}. $$
\end{lemma}
\begin{proof}
The space of harmonic forms includes both the harmonic $(1,0)$-forms and the harmonic $(0,1)$-forms, therefore we have the inequality
$$h^{1,0}_d + h^{0,1}_d \leq b^1.$$
The lemma then follows by equating $h^{1,0}_d$ with $h^{0,1}_d$ as in \eqref{eq numbers}.
\end{proof}

While proving Theorem \ref{thm-10d}, we also observe the following behaviour of the number $h^{1,0}_d$.
\begin{theorem}
    For each class of 4-dimensional non K\"ahler solvmanifold (cf. \cite[Table 6.1]{B}), $h^{1,0}_d$ can be made to take every integer value between 0 and $\frac{b^1}{2}$ by just varying the almost Hermitian metric. 
\end{theorem}

By Lemma \ref{lemma 1,0}, on any almost complex manifold with $b^1 = 1$ we always have $h^{1,0}_d = 0$. This includes the Inoue surfaces and the secondary Kodaira surface. This leaves only 3 cases (b, c and e in \cite[Table 6.1]{B}) each of which we shall consider in turn. These cases all have $b^1 = 2$ or $3$ and therefore the only possible values for $h^{1,0}_d$ are 0 or 1.

\subsection{Primary Kodaira surface}\label{subsection primary}

The primary Kodaira surface, here denoted by $M$, is given by  $\left(\mathbb Z \times H_3( \mathbb Z)\right) \backslash \left(\mathbb R \times H_3(\mathbb R) \right)$. That is, the quotient of the group $\mathbb R \times H_3(\mathbb R) $ by the (non normal) discrete subgroup  $\mathbb Z \times H_3( \mathbb Z)$ acting by left multiplication. 

Here $H_3(\R)$ denotes the Heisenberg group
\begin{equation*}
H_3(\R)=\left\{
\begin{pmatrix}1&x&z\\0&1&y\\0&0&1\end{pmatrix}
\in\GL(3,\R)\right\},
\end{equation*}
and $H_3(\Z)$ is the subgroup $H_3(\R)\cap\GL(3,\Z)$. Let $t$ denote the coordinate on $\mathbb R$, and $x,y,z$ the coordinates on $H_3(\R)$. The vector fields
\begin{equation*}
e_1=\de{}{t},\ e_2=\de{}{x},\ e_3=\de{}{y}+x\de{}{z},\ e_4=\de{}{z}
\end{equation*}
can be defined on the underlying group and in fact, since they are left invariant, are also well defined on the manifold $M$. They form a basis of $T_pM$ at each point $p\in M$. The dual left invariant coframe is given by
\begin{equation*}
e^1=dt,\ e^2=dx,\  e^3=dy,\ e^4=dz-xdy.
\end{equation*}

Endow $M$ with the left invariant integrable almost complex structure $J$ given by $\phi^1,\phi^2$ being a global coframe of $(1,0)$-forms, where
\[
\phi^1=e^2+ie^3,\ \ \ \phi^2=e^4+ie^1.
\]
 Their structure equations are
\[
\begin{cases}
d\phi^1=0,\\ d\phi^2=\frac{i}2\phi^{1\c1}.
\end{cases}
\]
Their dual frame is given by
\[
V_1=\frac12(e_2-ie_3),\ \ \ V_2=\frac12(e_4-ie_1).
\]
Endow $(M,J)$ with an almost Hermitian metric $\omega$, which can be written as
\begin{equation}\label{eq herm metr prim}
2\omega=is^2\phi^{1\bar{1}}+ir^2\phi^{2\bar{2}}+u\phi^{1\bar{2}}-\overline{u}\phi^{2\bar{1}},
\end{equation}
with $s,r$ real, positive valued smooth functions and $u$ a complex valued smooth function satisfying
\[
rs>|u|.
\]

\begin{lemma}\label{lemma-kodaira}
Let $\sigma=f\phi^1+g\phi^2$ be a $(1,0)$-form on $(M,J)$, where $f,g$ are complex valued smooth functions. Then
\[
d\sigma=0\ \ \  \iff \ \ \ f\in\C,\ g=0.
\]
\end{lemma}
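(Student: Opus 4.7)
The plan is to translate $d\sigma = 0$ into a system of scalar PDEs for $f$ and $g$, exploit that $J$ is integrable (so $\bar\mu = 0$ and $d = \partial + \bar\partial$), and then use compactness of $M$ together with the periodicity of coordinates to conclude. The reverse implication is immediate from $d\phi^1 = 0$, so only the forward direction needs work.

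First I would expand $d\sigma = \partial\sigma + \bar\partial\sigma$ in the bigraded basis, using $d\phi^1 = 0$, $d\phi^2 = \frac{i}{2}\phi^{1\bar 1}$ and $\partial f = (V_1 f)\phi^1 + (V_2 f)\phi^2$, $\bar\partial f = (\bar V_1 f)\phi^{\bar 1} + (\bar V_2 f)\phi^{\bar 2}$ (analogously for $g$). Collecting coefficients of $\phi^{12}, \phi^{1\bar 1}, \phi^{1\bar 2}, \phi^{2\bar 1}, \phi^{2\bar 2}$, I expect $d\sigma = 0$ to be equivalent to
\begin{equation*}
V_1 g = V_2 f, \qquad \bar V_1 f = \tfrac{i}{2}\,g, \qquad \bar V_2 f = 0, \qquad \bar V_1 g = 0, \qquad \bar V_2 g = 0.
\end{equation*}

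The last two equations say $\bar\partial g = 0$; since $M$ is a compact connected complex manifold, $g$ must be constant, so $g \equiv c \in \C$. The first equation then forces $V_2 f = 0$, and combining this with $\bar V_2 f = 0$, together with the coordinate expressions $V_2 = \tfrac{1}{2}(\partial_z - i\partial_t)$ and $\bar V_2 = \tfrac{1}{2}(\partial_z + i\partial_t)$, yields $\partial_t f = \partial_z f = 0$. Thus $f = f(x,y)$, and because the equivalence relation translates $(x,y)$ by integers (the $z$-twist being invisible to $f$), the function $f$ descends to a smooth function on $\T^2 = \R^2/\Z^2$.

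The last step is the remaining equation $\bar V_1 f = \tfrac{i}{2}c$, which under $\partial_z f = 0$ becomes $(\partial_x + i\partial_y)f = ic$ on $\T^2$. Integrating over the fundamental square, the left-hand side vanishes by periodicity, forcing $c = 0$ and hence $g = 0$; the equation then reads $(\partial_x + i\partial_y)f = 0$, so $f$ is holomorphic on $\T^2$ and therefore constant. I expect the only real obstacle to be bookkeeping — tracking signs, wedge orderings, and the coefficient $\tfrac{i}{2}$ coming from $d\phi^2$ — while the conceptual steps (holomorphicity forcing constancy, then torus integration forcing $c=0$) are standard.
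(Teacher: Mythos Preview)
Your argument is correct, but it is organized differently from the paper's. Both begin the same way: the $(1,1)$-equations $\bar V_1 g=\bar V_2 g=0$ force $g$ to be constant (you phrase this as ``holomorphic on a compact complex manifold'', the paper as the maximum principle for $V_1\bar V_1+V_2\bar V_2$, but these are the same fact). From here the arguments diverge. The paper observes that once $g$ is constant, $\bar V_1 f=\tfrac{i}{2}g$ is constant and $\bar V_2 f=0$, so $V_1\bar V_1 f+V_2\bar V_2 f=0$ and a second application of the maximum principle gives $f\in\C$ directly, whence $g=0$. You instead invoke the $(2,0)$-equation $V_1 g=V_2 f$ (which the paper never uses) to obtain $V_2 f=0$, reduce $f$ to a function on the $\T^2$-factor in the $(x,y)$-coordinates, and then integrate $(\partial_x+i\partial_y)f=ic$ over $\T^2$ to kill $c$. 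The paper's route is shorter and coordinate-free; yours is a bit more hands-on but has the advantage of avoiding any appeal to strong ellipticity of the mixed operator, relying only on the classical fact that holomorphic functions on compact tori are constant.
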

\begin{proof}
It suffices to prove the $\implies$ part. If $d\sigma=0$, then also $\delbar\sigma=0$ and thus
\begin{align*}
0=\delbar\sigma=-\c{V_1}f\phi^{1\c1}-\c{V_2}f\phi^{1\c2}-\c{V_1}g\phi^{2\c1}-\c{V_2}g\phi^{2\c2}+g\frac{i}2\phi^{1\c1},
\end{align*}
and in particular $\c{V_1}g=\c{V_2}g=0$, which implies $V_1\c{V_1}g+V_2\c{V_2}g=0$ and so $g\in\C$ by the maximum principle for second order strongly elliptic operators. Now, since $g\in\C$, it is also the case that $V_1\c{V_1}f+V_2\c{V_2}f=0$ and so $f\in\C$, which finally yields $g=0$.
\end{proof}

\begin{proposition}\label{prop-prim-kod}
Let $(M,J)$ be endowed with the almost Hermitian metric $\omega$ defined as in \eqref{eq herm metr prim}.
\begin{enumerate}
\item[I)] If $-\frac{i}2\c{V_2}\c{u}+s\c{V_1}s\neq 0$, then 
$$
\mathcal{H}^{1,0}_d=\{0\},\qquad 
\mathcal{H}^{1,0}_{\delbar}=\C < \Phi^1>.
$$
\item[II)] If $-\frac{i}2\c{V_2}\c{u}+s\c{V_1}s= 0$, then 
$$
\mathcal{H}^{1,0}_d=\mathcal{H}^{1,0}_{\delbar}=\C < \Phi^1> .
$$
\end{enumerate}
\end{proposition}
\begin{proof}
Take $\sigma=f\phi^1$ to be a general $d$-closed $(1,0)$-form on $(M,J)$, with $f\in\C$, by Lemma \ref{lemma-kodaira}. Its Hodge $*$ is
$$
*\sigma =\frac{i}{2}f\overline{u}\phi^{12\c{1}}+
\frac{1}{2}fs^2\phi^{12\c{2}}.
$$
We compute 
\[
d*\sigma=f\left(-\frac{i}2\c{V_2}\c{u}+s\c{V_1}s\right)\phi^{12\c1\c2},
\]
which ends the proof.
\end{proof}

In particular, if $u$ and $s$ are constant we have $h^{1,0}_d = 1$. Alternatively, we can choose $u$ to be constant and
$
s=e^{F(t,x)},
$
where $F\in C^{\infty}(M)$ is a smooth function on $M$ depending only on the coordinates $t$ and $x$, \textit{i.e.} a function on $\mathbb{R}^2$ satisfying
$$F(t+t_0,x+x_0)=F(t,x)
$$ 
for any given $(t_0,x_0)\in \mathbb{Z}^2$. In this case, $h^{1,0}_d=0$ iff
$$
\overline{V}_1(F)\neq0.
$$
This is sufficient to prove Theorem \ref{thm-10d} (note that the almost complex structure in this example is integrable).

\subsection{Compact quotient of Nil$^4$}
We start by recalling the definition of the group $\hbox{\rm Nil}^4$ as $(\mathbb{R}^4, *)$ where the product is defined as 
$$(a, b, c, d) * (x, y, z, t) = (x + a, y + b, z + ay + c, t + \frac{1}{2} a^2 y + az + d),
$$
for any $(a, b, c, d), (x, y, z, t)\in \R^4$. 

A straightforward calculation shows that the set of $1$-forms
\begin{equation}
\left\{
\begin{array}{l}
E^1 = dx,\\
E^2 = dy,\\
E^3 = dz - x dy,\\
E^4 = dt + \frac{1}{2} x^2 dy - x dz
\end{array}
\right.
\end{equation}
on ${\rm Nil}^4$ is a left invariant coframe. The dual frame is given by 
\begin{equation}\label{frame-Nil4}
\left\{
\begin{array}{l}
E_1 = \frac{\partial}{\partial x},\\
E_2 = \frac{\partial}{\partial y} + x \frac{\partial}{\partial z} + \frac{1}{2} x^2 \frac{\partial}{\partial t},\\
E_3 = \frac{\partial}{\partial z} + x \frac{\partial}{\partial t},\\
E_4 = \frac{\partial}{\partial t}.
\end{array}
\right.
\end{equation}
Then, the non trivial commutators are given by
$$
[E_1, E_2] = E_3, \qquad [E_1, E_3] = E_4,
$$
showing that $\hbox{\rm Nil}^4$ is a simply-connected nilpotent Lie group.
It turns out that $\Gamma =2\mathbb{Z}^4$ is a discrete uniform subgroup of $\hbox{\rm Nil}^4$, and thus $\Gamma\backslash \hbox{\rm Nil}^4$ is a compact $4$-nilmanifold; we set $M_{Nil}=\Gamma\backslash \hbox{\rm Nil}^4$. Then, the coframe $\{E^1,\ldots,E^4\}$ satisfies the following structure equations on $M_{Nil}$
\begin{equation}\label{structure-equations-Nil4}
dE^1 = 0, \qquad dE^2 = 0, \qquad dE^3 = -E^1 \wedge E^2, \qquad dE^4 = -E^1 \wedge E^3.
\end{equation}
It is known that $M_{Nil}$ has no complex structure.
Let us define an almost complex structure $J$ on $M$ by defining a global coframe of $(1,0)$-forms as
$$
\Phi^1=E^1+iE^2,\qquad \Phi^2=E^3+iE^4.
$$
Then, by \eqref{structure-equations-Nil4}, we obtain the complex structure equations 
\begin{equation}\label{complex-structure-equations-Nil4}
d\Phi^1=0,\qquad
d\Phi^2=-\frac{i}{4}\left(\Phi^{12}+2\Phi^{1\bar{1}}+\Phi^{1\bar{2}}-\Phi^{2\bar{1}}+\Phi^{\bar{1}\bar{2}}\right).
\end{equation}
 Let $g$ be the almost Hermitian metric on $M_{Nil}$ whose fundamental form is given by
\begin{equation}\label{metric-Nil4}
2\omega=ir^2\Phi^{1\bar{1}}+is^2\Phi^{2\bar{2}}+u\Phi^{1\bar{2}}-\overline{u}\Phi^{2\bar{1}},
\end{equation}
where $r,s$, respectively $u$, are smooth, positive real valued functions, respectively complex valued function on $M_{Nil}$, satisfying
$$
rs>\vert u\vert.
$$
\begin{lemma}
Let $\sigma=f\Phi^1+g\Phi^2\in A^{1,0}(M_{Nil})$, where $f$, $g$ are smooth complex valued functions on $M_{Nil}$. Then  
$$
d\sigma=0\ \ \  \iff\ \ \   f\in\C,\  g=0.
$$ 
\end{lemma}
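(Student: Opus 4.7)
The plan is to exploit the non-trivial $(0,2)$-component of $d\Phi^2$ to force $g\equiv 0$ at once, and then reduce the surviving conditions on $f$ to a Cauchy--Riemann equation on a real two-torus.

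First I would write $\sigma=f\Phi^1+g\Phi^2$ with $f,g$ smooth $\C$-valued functions on $M_{Nil}$ and, using $d\Phi^1=0$, expand
\[
d\sigma \;=\; df\wedge\Phi^1 + dg\wedge\Phi^2 + g\,d\Phi^2.
\]
By \eqref{complex-structure-equations-Nil4} the only summand on the right of pure bidegree $(0,2)$ is $-\tfrac{i}{4}g\,\Phi^{\bar1\bar2}$, coming from $g\,d\Phi^2$; neither $df\wedge\Phi^1$ nor $dg\wedge\Phi^2$ has any $(0,2)$-component. Hence the $(0,2)$-part of $d\sigma=0$ immediately forces $g\equiv 0$. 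This is where non-integrability of $J$ pays off: the nonzero $\overline{\mu}$-term in $d\Phi^2$ does all the work, with no computation needed.

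With $g\equiv 0$ the equation reduces to $df\wedge\Phi^1=0$. Expanding $df = V_1 f\,\Phi^1+V_2 f\,\Phi^2+\overline{V_1}f\,\Phi^{\bar1}+\overline{V_2}f\,\Phi^{\bar2}$ in the dual frame of $\{\Phi^1,\Phi^2\}$ and comparing bidegrees after wedging with $\Phi^1$ yields
\[
V_2 f \;=\; \overline{V_1}f \;=\; \overline{V_2}f \;=\; 0.
\]
Translating back via \eqref{frame-Nil4}, the relations $V_2=\tfrac12(E_3-iE_4)$ and $\overline{V_2}=\tfrac12(E_3+iE_4)$ together with $V_2 f=\overline{V_2}f=0$ give $E_3 f=E_4 f=0$, i.e.\ $\partial_z f=\partial_t f=0$. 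By the $\Gamma=2\mathbb{Z}^4$-invariance, $f$ thus descends to a smooth function on the $(x,y)$-torus $\R^2/2\mathbb{Z}^2$. The remaining condition $\overline{V_1}f=\tfrac12(\partial_x+i\partial_y)f=0$ is then the Cauchy--Riemann equation on this torus; applying $V_1$ produces $(\partial_x^2+\partial_y^2)f=0$, so by the maximum principle applied to $\real f$ and $\img f$ separately, $f$ must be constant --- the same kind of argument already used in Lemma \ref{lemma-kodaira}.

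I do not anticipate a real obstacle: the only hands-on computation needed is checking that the coefficient of $\Phi^{\bar1\bar2}$ in $d\Phi^2$ is nonzero, which is visible at a glance from \eqref{complex-structure-equations-Nil4}. Everything else is bidegree bookkeeping followed by the standard elliptic/maximum principle argument on a torus.
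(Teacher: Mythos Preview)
Your argument is correct, and it takes a genuinely different route from the paper. The paper works only with the $(1,1)$-part $\delbar\sigma=0$, obtaining a system of four first-order equations in $f$ and $g$; from $\overline{V}_2 g=0$ it first shows $g=g(x,y)$ via a fibrewise elliptic argument on $\pi^{-1}(x,y)$ (where $V_2\overline{V}_2$ acts as a strongly elliptic operator in $(z,t)$), then deduces $f=f(x,y)$ similarly, and only afterwards forces $g=0$ and $f\in\C$. You instead go straight to the $(0,2)$-part $\overline\mu\sigma=0$, which, because $\overline\mu\Phi^2=-\tfrac{i}{4}\Phi^{\bar1\bar2}\neq0$, kills $g$ in one line; the remaining conditions on $f$ then reduce cleanly to a Cauchy--Riemann equation on the base torus. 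Your use of non-integrability is sharper and avoids the fibrewise ellipticity step, while the paper's approach has the advantage that it only uses $\delbar$-closedness, which is why it can simultaneously characterise $\mathcal{H}^{1,0}_{\delbar}$ (cf.\ the proposition following the lemma).
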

\begin{proof}
It suffices to prove the $\implies$ part. If $d\sigma=0$, then also $\delbar\sigma=0$ and thus
\begin{align*}
0=\delbar\sigma=& -\overline{V}_1(f)\Phi^{1\bar{1}}-\overline{V}_2(f)\Phi^{1\bar{2}}- \overline{V}_1(g)\Phi^{2\bar{1}}-\overline{V}_2(g)\Phi^{2\bar{2}}+\\
& -\frac{i}{2}g\Phi^{1\bar{1}}-\frac{i}{4}g\Phi^{1\bar{2}}
+\frac{i}{4} g\Phi^{2\bar{1}}.
\end{align*}
By the above calculation,
$\delbar\sigma =0$ if and only if 
\begin{equation*}
\begin{cases}
\overline{V}_1(f)+\frac{i}{2} g=0,\\
\overline{V}_2(f)+\frac{i}{4} g=0,\\
\overline{V}_1(g)-\frac{i}{4} g=0,\\
\overline{V}_2(g)=0.
\end{cases}
\end{equation*}
Note that the projection $\pi:\R^4\to\R^2$ defined by $\pi(x,y,z,t)=(x,y)$ gives rise to a well defined map $\pi: M_{Nil}\to \mathbb{T}^2$. Therefore, the last equation gives immediately that  
$$
V_2\overline{V}_2(g)=0,
$$
and since
$$
V_2\overline{V}_2=\left(\partial^2_{zz}+2x\partial^2_{tz}+(1+x^2)\partial^2_{tt}\right),
$$
for any fixed $(x,y)$ we can apply the maximum principle for second order strongly elliptic operators on the compact submanifold $\pi^{-1}(x,y)$, implying
$$
g=g(x,y).
$$
Now from the second equation we get
$$
V_2\overline{V}_2(f)=0,
$$
which, arguing as before, yields
$$
f=f(x,y).
$$
Therefore, from the second equation we derive $g=0$, and consequently the first equation implies that $f\in \C$. The Lemma is proved.
\end{proof}

Arguing as in Proposition \ref{prop-prim-kod}, we obtain
\begin{proposition}
Let $(M_{Nil},J)$ be endowed with the almost Hermitian metric $\omega$ defined as in \eqref{metric-Nil4}. 
\begin{enumerate}
\item[I)] If $-\frac{i}2\c{V_2}\c{u}+s\c{V_1}s\neq 0$, then 
$$
\mathcal{H}^{1,0}_d=\{0\},\qquad 
\mathcal{H}^{1,0}_{\delbar}=\C < \Phi^1>.
$$
\item[II)] If $-\frac{i}2\c{V_2}\c{u}+s\c{V_1}s= 0$, then 
$$
\mathcal{H}^{1,0}_d=\mathcal{H}^{1,0}_{\delbar}=\C < \Phi^1> .
$$
\end{enumerate}
\end{proposition}

In particular, we can choose $u$ to be constant and
$
s=e^{F(x,y)},
$
where $F \in C^{\infty}(M_{Nil})$ is a smooth function on $M_{Nil}$ depending only on the coordinates $x$ and $y$, \textit{i.e.} a function on $\mathbb{R}^2$ satisfying
$$F(x+2\alpha,y+2\beta)=F(x,y)
$$ 
for any given $(\alpha,\beta)\in \mathbb{Z}^2$. In this case, $h^{1,0}_d=0$ iff
$$
\overline{V}_1(F)\neq0.
$$

\subsection{Compact quotient of $\mathbb{R} \times $Sol$^3$}

Let $\Sol$ denote the group $(\R^4,*)$, where the product is defined as 
 $$(s,a,b,c) * (t,x,y,z) = (s+t, a+x, b + e^s y, c+ e^{-s} z)$$
 for any $(s,a,b,c), (t,x,y,z) \in \mathbb{R}^4$.
The set of 1-forms
$$\begin{cases}
    E^1 = dt,\\
    E^2 = dx,\\
    E^3 = e^{-t}dy,  \\
    E^4 = e^{t} dz
\end{cases}$$
is a left invariant coframe on $\Sol$, and the dual frame is given by
$$\begin{cases}
    E_1 = \frac{\partial}{\partial t},\\
    E_2 = \frac{\partial}{\partial x},\\
    E_3 = e^{t}\frac{\partial}{\partial y}, \\
    E_4 = e^{-t}\frac{\partial}{\partial z}.
\end{cases}$$
The above coframe satisfies the structure equations
\begin{equation}\label{str eq sol3}
    dE^1 = 0, \quad \quad dE^2 = 0, \quad \quad dE^3 = -E^{1}\wedge E^3, \quad \quad dE^4 = E^{1}\wedge E^4.
\end{equation}
Let us define a left invariant almost complex structure $J$ on $\Sol$ by defining a global coframe of $(1,0)$-forms 
$$\Phi^1 = E^1 + i E^2, \quad \quad \quad \quad \Phi^2 =  E^3 + i E^4. $$
Then, by \eqref{str eq sol3}, we obtain the structure equations
$$
\begin{cases}
d\Phi^1 = 0,\\ 
d\Phi^2 = \frac 12 \left(\Phi^{1 \bar 2} + \Phi^{\bar 1 \bar 2}\right).
\end{cases}
$$

Given any integer-valued matrix $A \in SL_2(\mathbb Z)$ such that $\tr A \geq 3$, we can define a discrete uniform subgroup $\Gamma_A$ of $\Sol$ generated by
$$\Gamma_A = \left< (\kappa,0,0,0), (0,1,0,0), (0,0,p,q), (0,0,r,s) \right>.  $$
Here $\kappa, p, q, r, s \in \mathbb{R}, \kappa > 0$ are defined such that
$$ A 
 = \begin{pmatrix} p & r \\ q & s\end{pmatrix}^{-1}
\begin{pmatrix} e^\kappa & 0 \\ 0 & e^{-\kappa}\end{pmatrix}
\begin{pmatrix} p & r \\ q & s\end{pmatrix}.$$
We thereby obtain a solvmanifold $M_A = \Gamma_A \backslash (\Sol).$

Let $g$ be the almost Hermitian metric on $M_A$ whose fundamental form is given by 
\begin{equation}\label{herm metric sol3}
    2\omega = i r^2 \Phi^{1 \bar 1} + is^2 \Phi^{2 \bar 2} + u \Phi^{1\bar 2} - \overline u \Phi^{2\bar 1} 
\end{equation}
where $r$ and $s$ are smooth, positive real valued functions and $u$ is a smooth, complex valued function satisfying
$$rs > |u|.$$

\begin{lemma}
Let $\sigma = f\Phi^1 + g \Phi^2 \in A^{1,0}(M_A)$, where $f$ and $g$ are smooth, complex valued functions on $M_A$. Then
$$d\sigma = 0\ \ \  \iff  f \in \mathbb{C},\ g=0. $$
\end{lemma}
\begin{proof}
 It suffices to prove the $\implies$ part. If $d\sigma=0$, then also $\delbar\sigma=0$ and thus
    $$0=\overline \partial \sigma = -\overline{V}_1 (f)\Phi^{1\bar{1}}-\overline{V}_2(f)\Phi^{1\bar{2}}- \overline{V}_1(g)\Phi^{2\bar{1}}-\overline{V}_2(g)\Phi^{2\bar{2}}+ \frac 12 g \Phi^{1 \bar 2}. $$
We conclude arguing as in Lemma \ref{lemma-kodaira}.
\end{proof}

Again, as in Proposition \ref{prop-prim-kod} we conclude
\begin{proposition}
Let $(M_A, J)$ be endowed with the almost Hermitian metric $\omega$ defined above in \eqref{herm metric sol3}.
\begin{enumerate}
\item[I)] If $-\frac{i}2\c{V_2}\c{u}+s\c{V_1}s\neq 0$, then 
$$
\mathcal{H}^{1,0}_d=\{0\},\qquad 
\mathcal{H}^{1,0}_{\delbar}=\C < \Phi^1>.
$$
\item[II)] If $-\frac{i}2\c{V_2}\c{u}+s\c{V_1}s= 0$, then 
$$
\mathcal{H}^{1,0}_d=\mathcal{H}^{1,0}_{\delbar}=\C < \Phi^1> .
$$
\end{enumerate}
\end{proposition}

\section{$h^{p,0}_{d}$ on compact almost Hermitian manifolds}\label{sec-invariants-p0}

In this section we are interested in studying the properties of the numbers $h^{p,0}_d$ in the higher dimensional case. The following Lemma shows that $h^{p,0}_d$ does not vary with different choices of almost K\"ahler metrics.

\begin{lemma}\label{lemma-p0 kahler harm}
Let $(M,J,g,\omega)$ be a compact almost K\"ahler manifold of real dimension $2n$. For any bidegree $(p,q)\in\{(p',0),(0,q'),(p',n),(n,q')\,:\,0\le p',q'\le n\}$, the injection $j$ of Lemma \ref{lemma-injection} is an isomorphism, $h^{p,q}_d= h^{p,q}_J$,
\[
\H^{p,q}_d=\ker d\cap A^{p,q},
\]
and there are no almost Hermitian metrics for which $h^{p,q}_d$ takes a larger value.
\end{lemma}
\begin{proof}
All $(p,0)$-forms are primitive and so $\H^{p,0}_d=\ker d\cap A^{p,0}$ by \eqref{eq-*-primitive}. This implies that $j$ is also surjective. The maximality of $h^{p,0}_d$ follows by the inequality of Lemma \ref{lemma-injection}. The other bidegrees follow by \eqref{eq numbers}.
\end{proof}

The following remark gives a possible hint as to how it could be possible to generalise the previous result for any bidegree.
\begin{remark}
Let $(M,J,g,\omega)$ be a $2n$-dimensional compact almost K\"ahler manifold. Cirici and Wilson \cite[Corollary 5.4]{CW} proved the following primitive decomposition:
\[
\H^{p,q}_d=\bigoplus_{r\geq\max(p+q-n,0)}L^r\left(\H^{p-r,q-r}_d\cap P^{p-r,q-r}\right).
\]
By \eqref{eq-*-primitive}, note that  for any bidegree $(p,q)$ we have
\[
\H^{p,q}_d\cap P^{p,q}=\ker d \cap P^{p,q}.
\]
However, since the space $\ker d \cap P^{p,q}$ in general depends on the choice of the almost K\"ahler metric $\omega$, we cannot directly conclude that $h^{p,q}_d$ is almost K\"ahler invariant.
\end{remark}

We end the section by proving a generalisation of \cite[Lemma 2.12]{DLZ} in higher dimension.

\begin{theorem}\label{n0}
Let $M$ be a $2n$-dimensional compact manifold endowed with a non integrable almost complex structure $J$. Then the only $d$-closed $(n,0)$-form is the zero form. This is equivalent to $h^{n,0}_d=0$ and to $h^{n,0}_J=0$.
\end{theorem}
\begin{proof}
Let us first show that if $J$ is non integrable then the only $d$-closed $(n,0)$-form is the zero form.
Let $\psi\in A^{n,0}$ be a $d$-closed form and fix any almost Hermitian metric. Since $\psi$ is primitive, then by \eqref{eq-*-primitive} the closed form $\psi$ is also $d$-harmonic. For any point $p\in M$, fix $\phi^1,\dots,\phi^n$ as a local coframe of $(1,0)$-forms centered on $p$. Then $\psi$ can be locally written as
\[
\psi=f\phi^1\wedge\dots\wedge\phi^n,
\]
where $f$ is a smooth function with complex values. We compute
\begin{align*}
0=\mubar\psi=&f\mubar(\phi^1\wedge\dots\wedge\phi^n)\\
=&f\big(\mubar\phi^1\wedge\phi^2\dots\wedge\phi^n-\phi^1\wedge\mubar\phi^2\wedge\dots\wedge\phi^n+\dots\\
+&(-1)^{n-1}\phi^1\wedge\dots\wedge\phi^{n-1}\wedge\mubar\phi^n\big).
\end{align*}
Since $\mubar\phi^j\in A^{0,2}$, it follows that either $f(p)=0$ (and so $\psi(p)=0$) or $\mubar(p)=0$.
 
In order to prove the claim by contradiction, assume that there exists a $d$-closed form $\psi\not\equiv0$. 
Since $\psi\not\equiv0$ and it is $d$-harmonic, then the set
\[
\{p\in M\,|\,\psi(p)\ne0\}
\]
is dense in $M$ by the unique continuation theorem for harmonic differential forms \cite{AKS} (if $\psi\equiv0$ on an open set, then $\psi\equiv0$ everywhere on $M$) applied to the real and imaginary parts of $\psi$.
Therefore, the set
\[
\{p\in M\,|\,\mubar(p)=0\}\supseteq\{p\in M\,|\,\psi(p)\ne0\}
\]
is also dense in $M$. By continuity, this means that $\mubar\equiv0$ everywhere on $M$, i.e., $J$ is integrable, which is a contradiction.

Finally, let us show that $h^{n,0}_d=0$ iff $h^{n,0}_J=0$ iff the only $d$-closed $(n,0)$-form is the zero form.
Clearly, if the only $d$-closed $(n,0)$-form is the zero form, then $h^{n,0}_d=0$ and $h^{n,0}_J=0$.
On the other hand, by \eqref{eq-*-primitive}, if $h^{n,0}_d=0$ then the only $d$-closed $(n,0)$-form is the zero form.
Moreover, if $h^{n,0}_J=0$, then the only $d$-closed $(n,0)$-forms are $d$-exact, but by \eqref{eq-*-primitive} they would be also $d$-harmonic, which cannot be since exact and harmonic forms are $L^2$ orthogonal.
\end{proof}

\section{Harmonic $(1,1)$-forms on compact quotients of Lie groups}
\label{sec quotient lie 11}

In this section we will focus on harmonic forms of bidegree $(1,1)$. The next Lemma gives a characterisation of harmonic $(1,1)$-forms in terms of their primitive decomposition on compact almost K\"ahler manifolds.

\begin{lemma}\label{lemma-primitive-11}
Let $(M,J,g,\omega)$ be a compact almost K\"ahler manifold of dimension $2n$. Let $\alpha$ be a $(1,1)$-form: its primitive decomposition is
\[
\alpha=\beta+f\omega,
\]
with $f\in\cinf(M;\C)$ and $\beta\in P^{1,1}$, i.e., $\beta\in A^{1,1}$ and $\omega^{n-1}\wedge\beta=0$. Assume that $\alpha$ is closed. Then $\alpha$ is harmonic iff $f$ is constant.
\end{lemma}
\begin{proof}
The Hodge $*$ operator of $\alpha$ is computed via \eqref{eq-*-primitive}:
\[
*\alpha=-\frac{1}{(n-2)!}\omega^{n-2}\wedge\beta+\frac{f}{(n-1)!}\omega^{n-1}.
\]
Furthermore, the form $\alpha$ is closed iff
\[
d\beta+df\wedge\omega=0,
\]
and it is co-closed iff
\[
-\frac{1}{(n-2)!}\omega^{n-2}\wedge d\beta+\frac{1}{(n-1)!}df\wedge\omega^{n-1}=0.
\]
Assuming that $\alpha$ is closed, then $\alpha$ is co-closed iff
\[
\left(\frac{1}{(n-2)!}+\frac{1}{(n-1)!}\right)df\wedge\omega^{n-1}=0,
\]
iff $df=0$ by the Lefschetz isomorphism and so iff $f$ is constant.
\end{proof}

Focusing on the special case of compact quotients of Lie groups endowed with almost K\"ahler structures, we obtain the following result which, as we will see, has implications both on the cohomology spaces $H^{1,1}_J$ and on the space of harmonic $(1,1)$-forms $\H^{1,1}_d$.

\begin{theorem}\label{thm kahler harmonic inv}
Let $G$ be a $2n$-dimensional Lie group endowed with an almost K\"ahler structure $(J,g)$ such that the fundamental form $\omega$ is left invariant, and let $\Gamma$ be a discrete subgroup such that $M=\Gamma\backslash G$ is compact. Then every left invariant closed $(1,1)$-form on $M$ is harmonic.
\end{theorem}
\begin{proof}
Let $\alpha$ be a left invariant closed $(1,1)$-form on $M$. Its primitive decomposition is
\[
\alpha=\beta+f\omega,
\]
with $\beta\in P^{1,1}$, i.e., $\beta\in A^{1,1}$ and $\omega^{n-1}\wedge\beta=0$, and $f\in\cinf(M)$. Observe that
\[
\omega^{n-1}\wedge\alpha=f\omega^{n}
\]
and it is a left invariant form, therefore $f$ is a complex constant. Then, since $\alpha$ and $\omega$ are closed, we can deduce
\[
d\beta=0.
\]
Since $\beta$ is primitive, it follows that $\beta$ is also harmonic by \eqref{eq-*-primitive}. Considering that also $\omega$ is harmonic, we conclude that $\alpha$ is harmonic.
\end{proof}

If, in addition to the assumptions of the previous theorem, we also require that the $(1,1)$-representatives of the cohomology space $H^{1,1}_J$ are left invariant, we obtain that $h^{1,1}_d$ does not depend on the choice of almost K\"ahler metrics with left invariant fundamental form.

\begin{corollary}\label{cor kahler inv cohom}
Let $G$ be a $2n$-dimensional Lie group endowed with an almost K\"ahler structure $(J,g)$ such that the fundamental form $\omega$ is left invariant, and let $\Gamma$ be a discrete subgroup such that $M=\Gamma\backslash G$ is compact. Assume that
\[
H^{1,1}_J(M)=\Span_\C\langle[\phi_1]_{dR},\dots,[\phi_k]_{dR}\rangle,
\]
with $\phi_j$ left invariant closed $(1,1)$-forms. Then $h^{1,1}_d=h^{1,1}_J$ and
\[
\H^{1,1}_d=\Span_\C\langle\phi_1,\dots,\phi_k\rangle.
\]
In particular $h^{1,1}_d$ does not depend on the choice of almost K\"ahler metrics with left invariant fundamental form, and there are no almost Hermitian metrics for which $h^{1,1}_d$ takes a larger value.
\end{corollary}
\begin{proof}
By Lemma \ref{lemma-injection}, we always have $h^{1,1}_d\le h^{1,1}_J$, and since $\phi_1,\dots,\phi_k$ are harmonic by Theorem \ref{thm kahler harmonic inv}, we obtain the equality. The maximality of $h^{1,1}_d$ follows by the inequality of Lemma \ref{lemma-injection}.
\end{proof}

Note that the requirement that the $(1,1)$-representatives of the cohomology space $H^{1,1}_J$ are left invariant and that the fundamental form $\omega$ is left invariant does not force $J$ to be left invariant.

The following proposition tells us how harmonic forms and the cohomology space $H^{p,q}_J$ behave when both the almost Hermitian structure $(J,g,\omega)$ and the de Rham cohomology $H^k_{dR}$ are left invariant.

\begin{proposition}\label{prop-isom-cohom-inv}
Let $G$ be a $2n$-dimensional Lie group endowed with a left invariant almost Hermitian structure $(J,g,\omega)$, and let $\Gamma$ be a discrete subgroup such that $M=\Gamma\backslash G$ is compact. Assume that for some degree $k$ the de Rham cohomology of $M$ is isomorphic to the cohomology of the Lie algebra $\mathfrak{g}$ of $G$, i.e.,
\[
H^k_{dR}(M)\cong H^k(\mathfrak{g}).
\]
Then every $k$-harmonic form on $M$ is left invariant and there is an isomorphism
\[
H^{p,q}_J(M)\cong H^{p,q}_J(\mathfrak{g})
\]
for any $p+q=k$.
\end{proposition}
\begin{proof}
For the first claim, note that by the Hodge isomorphism on the Lie algebra $\mathfrak{g}$ we know that the cohomology space $H^k(\mathfrak{g})$ is isomorphic to the space of harmonic $k$-forms on $\mathfrak{g}$. Moreover, since $H^k(\mathfrak{g})\cong H^k_{dR}(M)\cong \H^k_d$, we derive that all possible harmonic $k$-forms on $M$ are given by harmonic $k$-forms on $\mathfrak{g}$, therefore they are left invariant.

For the second claim, we recall some of the properties of symmetrisation operator
\[
\nu:A^k(M)\to \bigwedge^k \mathfrak{g}^*,
\]
see, e.g., \cite[p. 192]{ugarte} or \cite{P}.
It is linear and surjective, it commutes with the exterior derivative and with the left invariant almost complex structure, namely $d(\nu(\alpha))=\nu(d(\alpha))$ and $J(\nu(\alpha))=\nu(J(\alpha))$. Therefore it induces a linear surjection in cohomology
\begin{align*}
\tilde{\nu}:&\,H^k_{dR}(M)\to H^k(\mathfrak{g}),\\
&\,[\alpha]\mapsto[\nu(\alpha)].
\end{align*}
Since we are assuming that these two spaces are isomorphic, it follows that $\tilde{\nu}$ is an isomorphism. It is then easy to verify that $\tilde{\nu}$ induces also an isomorphism 
\[
\tilde{\nu}:H^{p,q}_J(M)\to H^{p,q}_J(\mathfrak{g}).\qedhere
\]
\end{proof}

Hattori's Theorem \cite{Ha} says that on a compact quotient of a completely solvable Lie group by a discrete subgroup the de Rham cohomology is isomorphic to the cohomology of the Lie algebra. Therefore from Corollary \ref{cor kahler inv cohom} and Proposition \ref{prop-isom-cohom-inv} we obtain the following result.

\begin{corollary}\label{cor-nilm}
Let $G$ be a $2n$-dimensional completely solvable Lie group endowed with a left invariant almost K\"ahler structure $(J,g,\omega)$, and let $\Gamma$ be a discrete subgroup such that $M=\Gamma\backslash G$ is compact.
Then $h^{1,1}_d=h^{1,1}_J$ does not depend on the choice of the left invariant almost K\"ahler metric, and there are no almost Hermitian metrics for which $h^{1,1}_d$ takes a larger value. If $2n=4$, then $h^{1,1}_J=b^-+1$.
\end{corollary}
\begin{proof}
The first claim follows directly from the above mentioned results. If $2n=4$, recall that $h^{1,1}_d=b^-+1$ by the proof of \cite[Proposition 6.1]{HZ}.
\end{proof}

\begin{corollary}\label{cor-ineq-serre}
Let $G$ be a $2n$-dimensional completely solvable Lie group endowed with a left invariant almost K\"ahler structure $(J,g,\omega)$, and let $\Gamma$ be a discrete subgroup such that $M=\Gamma\backslash G$ is compact.
Then $h^{n-1,n-1}_d$ does not depend on the choice of the left invariant almost K\"ahler metric, and there are no almost Hermitian metrics for which $h^{n-1,n-1}_d$ takes a larger value.
Furthermore
\[
h^{1,1}_J\le h^{n-1,n-1}_J.
\]
\end{corollary}
\begin{proof}
The first statement follows since $h^{1,1}_d=h^{n-1,n-1}_d$ by \eqref{eq numbers}. The second statement follows from Lemma \ref{lemma-injection} since $h^{1,1}_J=h^{1,1}_d=h^{n-1,n-1}_d\le h^{n-1,n-1}_J$.
\end{proof}

We now focus on dimension 4, weakening the almost K\"ahler assumption and studying the dimension of $h^{1,1}_J$. Recall that we always have the lower bound $ h^{1,1}_J \ge b^-$, since harmonic anti self dual forms provide linearly independent classes in $H^{1,1}_J$. By \cite[Theorem 3.3]{DLZ}, we also have the upper bound $h^{1,1}_J\le b^-+1$ if $J$ is tamed by a symplectic form and so in particular if the manifold admits a compatible almost K\"ahler metric. Without any assumption of almost K\"ahlerness, we recall the following results of \cite{PT4,Ho}.

\begin{theorem}[{\cite[Theorem 4.3]{PT4}}]\label{lem 4 gaud bc cost}
Let $(M,J,g,\omega)$ be a compact almost Hermitian manifold of real dimension $4$. Assume that $g$ is Gauduchon, then
\begin{equation*}
\H^{1,1}_{BC}=\{f\omega+\gamma\in A^{1,1}\,|\,f\in\C,\ *\gamma=-\gamma,\ d(f\omega+\gamma)=0\}.
\end{equation*}
\end{theorem}
\begin{theorem}[{\cite[Theorem 4.2]{Ho}}]\label{thm holt bc}
Let $(M,J,g,\omega)$ be a compact almost Hermitian manifold of real dimension $4$. Then $h^{1,1}_{BC}=b^-+1$.
\end{theorem}

In the special setting of compact quotients of 4-dimensional Lie groups with left invariant $\del\delbar$-closed fundamental form and left invariant $(1,1)$-cohomology, we obtain an injection of $H^{1,1}_J$ into the space of Bott-Chern harmonic $(1,1)$-forms.

\begin{theorem}\label{thm-gaud-bc-inj}
Let $G$ be a $4$-dimensional Lie group endowed with an almost complex structure $J$ and a Gauduchon metric $g$ such that the fundamental form $\omega$ is left invariant. Let $\Gamma$ be a discrete subgroup such that $M=\Gamma\backslash G$ is compact. Assume that
\[
H^{1,1}_J(M)=\Span_\C\langle[\phi_1]_{dR},\dots,[\phi_k]_{dR}\rangle,
\]
with $\phi_1,\dots,\phi_k$ left invariant closed $(1,1)$-forms.
Then $h^{1,1}_J\le h^{1,1}_{BC}=b^-+1$ and
\begin{equation}\label{eq h11bc inv incl}
\Span_\C\langle\phi_1,\dots,\phi_k\rangle\subseteq\H^{1,1}_{BC}(M,J,g,\omega).
\end{equation}
\end{theorem}
\begin{proof}
The primitive decomposition of every $\phi_j$ is
\[
\phi_j=f_j\omega+\gamma_j,
\]
with $f_j\in\C$ (since $\omega$ is left invariant) and $*\gamma_j=-\gamma_j$. By Theorem \ref{lem 4 gaud bc cost}, we obtain that $\phi_j\in\H^{1,1}_{BC}$. Moreover, since $\{[\phi_j]_{dR}\}_{j=1,\dots,k}$ are linearly independent de Rham cohomology classes, in particular $\{\phi_j\}_{j=1,\dots,k}$ are linearly independent forms. Therefore we have a linear injection given by
\begin{align*}
&H^{1,1}_J\hookrightarrow \H^{1,1}_{BC}\\
&[\phi_j]_{dR}\mapsto \phi_j\ \forall j=1,\dots,k
\end{align*}
which proves the theorem.
\end{proof}

Under the same assumptions, we are able to characterise when $h^{1,1}_J$ equals $b^-$ or $b^-+1$. The condition implicitly involves the space of Bott-Chern harmonic $(1,1)$-forms. Note that by Theorems \ref{lem 4 gaud bc cost} and \ref{thm holt bc} we know that there always exists a closed form $c\omega+\gamma$ with $0\ne c\in\C$ and $*\gamma=-\gamma$.

\begin{theorem}
Let $G$ be a $4$-dimensional Lie group endowed with an almost complex structure $J$ and a Gauduchon metric $g$ such that the fundamental form $\omega$ is left invariant. Let $\Gamma$ be a discrete subgroup such that $M=\Gamma\backslash G$ is compact. Assume that
\[
H^{1,1}_J(M)=\Span_\C\langle[\phi_1]_{dR},\dots,[\phi_k]_{dR}\rangle,
\]
with $\phi_1,\dots,\phi_k$ left invariant closed $(1,1)$-forms. Given any closed form $c\omega+\gamma$ with $0\ne c\in\C$ and $*\gamma=-\gamma$,
then $h^{1,1}_J=b^-$ iff $c\omega+\gamma$ is $d$-exact; otherwise $h^{1,1}_J=b^-+1$.
\end{theorem}
\begin{proof}
By Theorems \ref{lem 4 gaud bc cost} and \ref{thm holt bc} we know $h^{1,1}_{BC}=b^-+1$ and
\begin{equation}\label{eq char bc harm 11}
\H^{1,1}_{BC}=\Span_\C<c\omega+\gamma,\gamma_1,\dots,\gamma_{b-}>,
\end{equation}
where $d\gamma_j=0$ and $*\gamma_j=-\gamma_j$. Then, by \eqref{eq h11bc inv incl}, we obtain that $h^{1,1}_J=b^-+1$ iff the de Rham cohomology classes $[c\omega+\gamma]_{dR},[\gamma_1]_{dR},\dots,[\gamma_{b-}]_{dR}$ are linearly independent. Assume that a linear combination of the representatives is exact, i.e.,
\begin{equation}\label{eq linear comb}
A(c\omega+\gamma)+\sum A_j\gamma_j=d\eta,
\end{equation}
with $A,A_j\in\C$, then $d\eta-A(c\omega+\gamma)$ is a primitive closed form. If the primitive decomposition of $d\eta$ is
\[
d\eta=f\omega+\tilde\gamma,
\]
with $f\in\cinf(M;\C)$ and $*\tilde\gamma=-\tilde\gamma$, then it must be $f\equiv Ac$. By Theorem \ref{lem 4 gaud bc cost}, we obtain that
 $d\eta\in\H^{1,1}_{BC}$. Since every $\gamma_j$ is harmonic, and harmonic and exact forms are $L^2$ orthogonal, from \eqref{eq char bc harm 11} it follows that $d\eta$ is a multiple of $c\omega+\gamma$ and since $f\equiv Ac$ we get $\tilde\gamma=A\gamma$ and so
\[
d\eta=A(c\omega+\gamma).
\]
Now, from \eqref{eq linear comb} we deduce that $A_j=0$ for all $j$; furthermore, we obtain that $A=0$ if $c\omega+\gamma$ is not exact. Therefore, if $c\omega+\gamma$ is not exact, then the cohomology classes $[c\omega+\gamma]_{dR},[\gamma_1]_{dR},\dots,[\gamma_{b-}]_{dR}$ are linearly independent and we obtain $h^{1,1}_J=b^-+1$. On the other hand, if $c\omega+\gamma$ is exact, then the same cohomology classes are linearly dependent and we get $h^{1,1}_J=b^-$.
\end{proof}

Recall that if the almost Hermitian structure $(J,g,\omega)$ is left invariant, then the metric is Gauduchon (see, e.g., \cite[Proposition 3.10]{FTT}), therefore we immediately obtain the following corollary.

\begin{corollary}
Let $G$ be a $4$-dimensional Lie group endowed with a left invariant almost Hermitian structure $(J,g,\omega)$, and let $\Gamma$ be a discrete subgroup such that $M=\Gamma\backslash G$ is compact. Assume that
\[
H^{1,1}_J(M)=\Span_\C\langle[\phi_1]_{dR},\dots,[\phi_k]_{dR}\rangle,
\]
with $\phi_1,\dots,\phi_k$ left invariant closed $(1,1)$-forms. Then $h^{1,1}_J\le b^-+1$ and
\[
\Span_\C\langle\phi_1,\dots,\phi_k\rangle\subseteq\H^{1,1}_{BC}(M,J,g,\omega).
\]
Moreover, given any closed form $c\omega+\gamma$ with $0\ne c\in\C$ and $*\gamma=-\gamma$,
then $h^{1,1}_J=b^-$ iff $c\omega+\gamma$ is $d$-exact; otherwise $h^{1,1}_J=b^-+1$.
\end{corollary}

Moreover, if the 4-dimensional group is completely solvable, we summarise the previous results in the following corollary.

\begin{corollary}\label{cor-compl-solv-non-kahl}
Let $G$ be a $4$-dimensional completely solvable Lie group endowed with a left invariant almost Hermitian structure $(J,g,\omega)$, and let $\Gamma$ be a discrete subgroup such that $M=\Gamma\backslash G$ is compact. Then $h^{1,1}_J\le b^-+1$, there is an injection
\[
H^{1,1}_J(M)\hookrightarrow \H^{1,1}_{BC}(M,J,g,\omega)
\]
and, given any closed form $c\omega+\gamma$ with $0\ne c\in\C$ and $*\gamma=-\gamma$,
we have $h^{1,1}_J=b^-$ iff $c\omega+\gamma$ is $d$-exact; otherwise $h^{1,1}_J=b^-+1$.
\end{corollary}

\section{Examples}\label{sec examples}

In this section we illustrate two examples with a notable relationship between the numbers $h^{p,q}_d$ and $h^{p,q}_J$.

\subsection{Harmonic $(p,q)$-forms on a six dimensional nilmanifold}\label{ex fino tomassini}
Here we provide an example where the inclusion $j$ of Lemma \ref{lemma-injection} is not an isomorphism, i.e., where
\begin{equation}\label{eq-inequality-hodge}
h^{p,q}_J>h^{p,q}_d.
\end{equation}
To be able to make computations, we will work on a 6-dimensional compact nilmanifold. In this case, by Lemma \ref{lemma-injection} and Corollary \ref{cor-nilm}, the strict inequality \eqref{eq-inequality-hodge} is only possible for $(p,q)\in\{(2,1),(1,2)\}$.

Let us consider the 6-dimensional nilmanifold $M$ of \cite[Example 3.3]{FT} endowed with a different almost complex structure. Let $M$ be given by a compact quotient of the 6-dimensional real nilpotent Lie group $G$ with structure equations
\[
\begin{cases}
de^j=0,\ j=1,2,3,4,\\
de^5=e^{12},\\
de^6=e^{13}
\end{cases}
\]
by a uniform discrete subgroup.
Since $M$ is a nilmanifold, we know by Nomizu theorem that the de Rham cohomology of $M$ is isomorphic to the cohomology of Lie algebra $\mathfrak{g}$ of $G$, i.e., for all degrees $k$ he have the isomorphism
\[
H^k_{dR}(M)\cong H^k(\mathfrak{g}),
\]
therefore $b^1=4$, $b^2=9$ and $b^3=12$.
Endow $M$ with the left invariant almost complex structure $J$ given by
\[
\eta^1=e^1+ie^4,\ \ \ \eta^2=e^2+ie^5,\ \ \ \eta^3=e^3+ie^6.
\]
The complex structure equations are
\[
\begin{cases}
d\eta^1=0,\\
d\eta^2=\frac{i}4\left(\eta^{12}+\eta^{1\c2}-\eta^{2\c1}+\eta^{\c1\c2}\right),\\
d\eta^3=\frac{i}4\left(\eta^{13}+\eta^{1\c3}-\eta^{3\c1}+\eta^{\c1\c3}\right).
\end{cases}
\]
Choose the left invariant almost K\"ahler metric given by
\[
\omega=e^{14}+e^{25}+e^{36}.
\]
Note that the form
\[
e^{145}=\frac14\eta^{1\c1}\wedge(\eta^2-\eta^{\c2})=\frac14\left(-\eta^{12\c1}-\eta^{1\c1\c2}\right)
\]
is harmonic, and since
\[
de^{45}=-e^{124}=\frac{i}4\eta^{1\c1}\wedge(\eta^2+\eta^{\c2})=\frac{i}4\left(-\eta^{12\c1}+\eta^{1\c1\c2}\right),
\]
its de Rham class satisfies
\[
[\eta^{12\c1}+\eta^{1\c1\c2}]=[\eta^{12\c1}+\eta^{1\c1\c2}+4ide^{45}]=2[\eta^{12\c1}]\in H^{2,1}_J,
\]
while its harmonic representative is not of bidegree $(2,1)$, therefore the inclusion
\[
j:\H^{2,1}_d\hookrightarrow H^{2,1}_J
\]
is not surjective and $h^{2,1}_J>h^{2,1}_d$. With similar computations one finds
\[
[\eta^{13\c1}+\eta^{1\c1\c3}]=[\eta^{13\c1}+\eta^{1\c1\c3}+4ide^{46}]=2[\eta^{13\c1}]\in H^{2,1}_J.
\]
By Proposition \ref{prop-isom-cohom-inv} and linear computations, one can show that $h^{1,0}_d=1$, $h^{2,0}_d=0$, $h^{1,1}_d=6$, $h^{2,1}_d=3$ and $h^{1,0}_J=1$, $h^{2,0}_J=0$, $h^{1,1}_J=6$, $h^{2,1}_J=5$, $h^{2,2}_J=7$. Note that $h^{1,1}_d=h^{1,1}_J$ is guaranteed by Corollary \ref{cor-nilm}, and $h^{1,1}_J=6<7=h^{2,2}_J$ shows that the inequality in Corollary \ref{cor-ineq-serre} can be strict.

Let us now investigate the value of $h^{2,1}_d$ for different metrics.
Let $\omega'$ be the fundamental form of any left invariant almost Hermitian metric. Then we can write
\[
\omega'=\frac{i}2(\psi^{1\c1}+\psi^{2\c2}+\psi^{3\c3})
\]
where
\begin{align*}
&\psi^1=A\eta^1,\\
&\psi^2=B\eta^1+C\eta^2,\\
&\psi^3=D\eta^1+E\eta^2+F\eta^3,
\end{align*}
with $A,C,F\in\R\setminus\{0\}$ and $B,D,E\in\C$. It is straightforward to check that $d\omega'=0$ iff
$B,D,E\in \R$.

For any almost Hermitian $\omega'$as above we compute
\begin{align*}
&d\psi^{12\c1}=d\psi^{12\c2}=d\psi^{13\c1}=0,\\
&d\psi^{12\c3}=ACFd\eta^{12\c3}=-ACFd\eta^{13\c2}=-d\psi^{13\c2},\\
&d\psi^{13\c3}=AF(E-\c{E})d\eta^{12\c3},
\end{align*}
while $d\psi^{23\c1},d\psi^{23\c2},d\psi^{23\c3},d\eta^{12\c3}$ can be shown to be linearly independent.
Let
\[
\gamma=\sum H_{ij\c{k}}\psi^{ij\c{k}}
\]
be any left invariant $(2,1)$-form. It follows that $\gamma$ is closed iff
\begin{align*}
&C H_{12\c3}-C H_{13\c2}+(E-\c{E}) H_{13\c3}=0,\\ 
&H_{12\c1}=H_{12\c2}=H_{13\c1}\in\C,\\ 
&H_{23\c1}=H_{23\c2}=H_{23\c3}=0,
\end{align*}
while $\gamma$ is harmonic iff
\begin{align*}
&C H_{12\c3}-C H_{13\c2}+(E-\c{E}) H_{13\c3}=0,\\ 
&C H_{12\c3}-C H_{13\c2}-(E-\c{E}) H_{12\c2}=0,\\ 
&H_{12\c1}=H_{13\c1}=H_{23\c1}=H_{23\c2}=H_{23\c3}=0.
\end{align*}
By Proposition \ref{prop-isom-cohom-inv} every harmonic form is left invariant,
therefore the space of harmonic $(2,1)$-forms is
\[
\H^{2,1}_d=\Span_\C<\psi^{12\c3}+\psi^{13\c2},\psi^{12\c2},\psi^{13\c3}>.
\]
if $E\in\R$, and
\[
\H^{2,1}_d=\Span_\C<\psi^{12\c3}+\psi^{13\c2},\psi^{12\c2}-\psi^{13\c3}+\frac{E-\c{E}}{C}\psi^{12\c3}>
\]
if $E\in\C\setminus\R$. Note that if $\omega'$ is almost K\"ahler, then $h^{2,1}_d=3$.
Summing up, we have the following result.
\begin{proposition}\label{prop ex 21}
For any left invariant almost K\"ahler metric $(g,\omega)$ on $(M,J)$  the natural injection
\[
j:\H^{2,1}_d\hookrightarrow H^{2,1}_J
\]
is not surjective, i.e., $5=h^{2,1}_J>h^{2,1}_d=3$. Moreover, there are no left invariant almost Hermitian metrics for which $h^{2,1}_d$ takes a larger value.
\end{proposition}

Note that by Proposition \ref{prop ex 21}, Lemma \ref{lemma-p0 kahler harm} and Corollary \ref{cor-nilm}, for any $(p,q)$, $h^{p,q}_d$ does not depend on the choice of the left invariant almost K\"ahler metric, and there are no left invariant almost Hermitian metrics for which $h^{p,q}_d$ takes a larger value.\qed

\subsection{$4$-torus with non left invariant almost complex structure}
The following example shows that, in Corollary \ref{cor-nilm}, the assumption of the almost K\"ahler structure $(J,g,\omega)$ being left invariant is necessary for $h^+_J=h^{1,1}_J=b^-+1$. In fact, on the 4-torus we build a non left invariant almost complex structure $J$, compatible with a left invariant almost K\"ahler form $\omega$, such that $h^+_J=h^{1,1}_J=5>4=b^-+1=h^{1,1}_d$.

Let $T^4$ be the 4-torus $\Z^4\backslash \R^4$ with coordinates $x^1,\dots,x^4$. Let $\alpha=\alpha(x^1,x^2)>0$ be a positive function on the torus, and define the almost complex structure $J$ given by
\[
\phi^1=dx^1+i\alpha dx^3,\ \ \ \phi^2=dx^2+idx^4,
\]
being a global coframe of $(1,0)$-forms. The structure equations are
\[
d\phi^1=i\alpha_1dx^{13}+i\alpha_2dx^{23},\ \ \  d\phi^2=0,
\]
where $\alpha_i:=\de{\alpha}{x^i}$, therefore $J$ is integrable iff $\alpha_2=0$.
We set
\begin{align*}
&v^1=\frac1\alpha dx^{12}-dx^{34},\\
&v^2=\frac1\alpha dx^{14}-dx^{23},\\
&w^1=dx^{13},\\
&w^2=\frac1\alpha dx^{24},\\
&w^3=\frac1\alpha dx^{12}+dx^{34},\\
&w^4=\frac1\alpha dx^{14}+dx^{23}.
\end{align*}
Note that $v^1,v^2$ span $J$ anti invariant forms, while $w^1,\dots,w^4$ span $J$ invariant forms. 

Let us first prove that $h^-_J=1$. Let $\psi=Av^1+\alpha Bv^2$ be any $J$ anti invariant form, with $A,B$ smooth functions on the torus, and compute $d\psi$,
\begin{align*}
d\psi=&+dx^{123}\left(\frac1\alpha A_3-(\alpha B)_1\right)+dx^{124}\left(\frac1\alpha A_4-B_2\right)+\\
&-dx^{134}(A_1+B_3)-dx^{234}(A_2+\alpha B_4).
\end{align*}
Therefore $d\psi=0$ iff
\begin{align*}
&\frac1\alpha A_4-B_2=0,\\
&\frac1\alpha A_2+B_4=0,\\
&A_3-\alpha(\alpha B)_1=0,\\
&A_1+B_3=0.
\end{align*}
From the first two equations we get
\[
0=\frac1\alpha A_{44}-B_{24}=\frac1\alpha A_{44}+\frac1\alpha A_{22}+\left(\frac1\alpha\right)_2A_2,
\]
thus $A=A(x^1,x^3)$ by the maximum principle for second order strongly elliptic operators. In fact, we can consider the projection $p:T^4\to T^2$ where $T^2$ is the 2-torus with coordinates $(x^1,x^3)$ and prove that $A,B$ are constant on the fiber $p^{-1}(x^1,x^3)$ which is a 2-torus with coordinates $(x^2,x^4)$. Since $A=A(x^1,x^3)$, from the first two equations it follows $B=B(x^1,x^3)$. From the last two equations we get
\[
0=(\alpha B)_{33}+\alpha A_{13}=(\alpha B)_{33}+\alpha^2(\alpha B)_{11}+\alpha\alpha_1(\alpha B)_1,
\]
thus $\alpha B=\alpha B(x^2,x^4)$ again by the maximum principle. Since neither $\alpha$ nor $B$ depend on $x^4$, then $\alpha B=\alpha B(x^2)$. Moreover, since $\alpha B=\alpha B(x^2)$ and $\alpha=\alpha(x^1,x^2)$, then $B=B(x^1)$. In the following, we choose
\[
\alpha=\sin(2\pi(x^1+x^2))+2,
\]
which implies $B=0$. In fact, from $(\alpha B)_1=0$, we get
\[
B_1=-B\frac{2\pi\cos(2\pi(x^1+x^2))}{\sin(2\pi(x^1+x^2))+2},
\] 
and since $B_1=B_1(x^1)$ we require $B=0$. Then, from the last two equations it follows that $A$ is constant. Summing up, for our choice of $\alpha$ we obtain $h^-_J=1$ and
\[
H^2_{J-}=\C<[v^1]>.
\]

Since $b^2=h^+_J+h^-_J$, we get $h^+_J=5$. We claim that 
\[
H^2_{J+}=\C<[w^1],[\alpha w^2],[w^3],[\alpha w^4+\alpha w^1],[w^4+w^2]>.
\]
Clearly the representatives are $J$ invariant and closed, therefore we only have to prove that the cohomology classes are linearly independent. Assume that a linear combination of the representatives are exact, i.e.,
\[
Aw^1+B\alpha w^2+Cw^3+D(\alpha w^4+\alpha w^1)+E(w^4+w^2)=d\eta,
\]
and compute the following $L^2$ inner products, taken with respect to the standard metric
\[
g=dx^1\otimes dx^1+dx^2\otimes dx^2+dx^3\otimes dx^3+dx^4\otimes dx^4.
\]
Denote by $\la\cdot,\cdot\ra$ the $L^2$ Hermitian inner product of $(p,q)$-forms on $T^4$ with respect to the metric $g$.
Since $dx^{ij}$ is harmonic with respect to $g$, and harmonic forms are orthogonal to exact ones, we get
\begin{align*}
&0=\la d\eta,dx^{13}\ra=A\int_{T^4}dx^{1234}+D\int_{T^4}\alpha dx^{1234}=A+2D,\\
&0=\la d\eta,dx^{14}\ra=D\int_{T^4}dx^{1234}+E\int_{T^4}\frac1\alpha dx^{1234}=D+\frac1{\sqrt3} E,\\
&0=\la d\eta,dx^{23}\ra=D\int_{T^4}\alpha dx^{1234}+E\int_{T^4}dx^{1234}=2D+E,\\
&0=\la d\eta,dx^{24}\ra=B\int_{T^4}dx^{1234}+E\int_{T^4}\frac1\alpha dx^{1234}=B+\frac1{\sqrt3} E,\\
&0=\la d\eta,dx^{34}\ra=C\int_{T^4}dx^{1234}=C,
\end{align*}
therefore $A=B=C=D=E=0$, proving our claim.\qed

\section{Compact almost Hermitian manifolds of dimension $4n$}\label{sec 4n}
Let $(M^{4n},J,g,\omega)$ be a $4n$-dimensional compact almost Hermitian manifold. Here we study the properties of $J$ of being \Cp\ and \Cf\ in degree $2n$, which always hold true when $n=1$ by \cite[Theorem 2.3]{DLZ}. Note that both the Hodge $*$ operator and the almost complex structure $J$ are involutions on the space of $2n$-differential forms, i.e., $*^2=J^2=1$, therefore we have the following decompositions of $2n$-forms:
\[
A^{2n}=A^{2n}_{g+}\oplus A^{2n}_{g-},
\]
and
\[
A^{2n}=A^{2n}_{J+}\oplus A^{2n}_{J-},
\]
where
\[
A^{2n}_{g\pm}:=\{\alpha\in A^{2n}\,|\,*\alpha=\pm\alpha\},
\]
and
\[
A^{2n}_{J\pm}:=\{\alpha\in A^{2n}\,|\,J\alpha=\pm\alpha\}.
\]
Since $*$ and $\Delta_d$ commute, we easily derive
\[
\H^{2n}_d=\H^{2n}_{g+}\oplus \H^{2n}_{g-},
\]
where
\[
\H^{2n}_{g\pm}:=\H^{2n}_d\cap A^{2n}_{g\pm}.
\]
Via the isomorphism between harmonic forms and de Rham cohomology, it is easy to see that
\[
\H^{2n}_{g+}\cong H^{2n}_{g+},
\]
and
\[
\H^{2n}_{g-}\cong H^{2n}_{g-},
\]
therefore the decomposition of harmonic forms has a cohomological counterpart as
\[
H^{2n}_{dR}=H^{2n}_{g+}\oplus H^{2n}_{g-},
\]
where
\[
H^{2n}_{g\pm}:=\{a\in H^{2n}_{dR}\,|\,a=[\alpha], *\alpha=\pm\alpha\}.
\]
A first observation is that in general $J$ is not \Cp\ nor \Cf\ in degree $2n$ on $M^{4n}$. Indeed, we have the following example.

\begin{example}\label{example fino tomassini}
Starting with \cite[Example 3.3]{FT}, we will construct a non integrable almost complex structure on an $8$-dimensional nilmanifold which is not \Cp\ nor \Cf\ in degree $4$. Consider the $8$-dimensional nilmanifold $M$, given by a compact quotient of the
8-dimensional real nilpotent Lie group with structure equations
\[
\begin{cases}
de^j=0,\ j=1,2,3,4,7,8,\\
de^5=e^{12},\\
de^6=e^{13}
\end{cases}
\]
by a uniform discrete subgroup. Endow $M$ with the left invariant almost complex structure $J$ given by
\[
\eta^1=e^1+ie^2,\ \ \ \eta^2=e^3+ie^4,\ \ \ \eta^3=e^5+ie^6,\ \ \ \eta^4=e^7+ie^8.
\]
The complex structure equations are
\[
\begin{cases}
d\eta^j=0,\ j=1,2,4,\\
d\eta^3=\frac{i}2\eta^{1\c1}+\frac{i}4\left(\eta^{12}+\eta^{1\c2}-\eta^{2\c1}+\eta^{\c1\c2}\right).
\end{cases}
\]
Note that $J$ is not \Cp\ in degree $4$, indeed
\begin{align*}
H^{4}_{J+}\ni
\left[\frac{i}{4}(\eta^{1\c2}+\eta^{\c12})\wedge\eta^{4\c4}\right]&=\left[(e^{24}+e^{13})\wedge e^{78}\right]\\
&=\left[(e^{24}-e^{13})\wedge e^{78} \right]=\left[-\frac{i}{4}(\eta^{12}+\eta^{\c1\c2})\wedge\eta^{4\c4}\right]\in H^{4}_{J-}.
\end{align*}
Moreover, $J$ is not \Cf\ in degree $4$. To prove this, let us consider the following $d$-harmonic form
\[
e^{2478}=\frac{i}{8}\left(\eta^{1\c2}+\eta^{\c12}-\eta^{12}-\eta^{\c1\c2}\right)\wedge\eta^{4\c4}
\]
We want to show that $e^{2478}$ is wedge product orthogonal to every $d$-closed 4-form which is either $J$ invariant or $J$ anti invariant. In this way, we are proving that the de Rham class $[e^{2478}]$ is cup product orthogonal to both the spaces $H^{4}_{J+}$ and $H^{4}_{J-}$. Let $\alpha$ be a real $J$ invariant 4-form and assume that $\alpha\wedge e^{2478}\ne0$. It turns out that $\alpha$ must be of the form
\begin{align*}
\alpha&=\frac{i}2\left(f\eta^{1\c2}+\c{f}\eta^{\c12}\right)\wedge\eta^{3\c3}\\
&=\left(f(e^{13}+e^{24}-ie^{14}+ie^{23})+\c{f}(e^{13}+e^{24}+ie^{14}-ie^{23})\right)\wedge e^{56},
\end{align*}
with $f$ a smooth complex valued function, and from this we easily compute $d\alpha\ne0$. Therefore, if $\alpha$ is a real $d$-closed $J$ invariant 4-form, then $\alpha\wedge e^{2478}=0$. Arguing in the same way, if $\alpha$ is a real $J$ anti invariant 4-form such that $\alpha\wedge  e^{2478}\ne0$, then
\begin{align*}
\alpha&=\frac{i}2\left(f\eta^{12}+\c{f}\eta^{\c1\c2}\right)\wedge\eta^{3\c3}\\
&=\left(f(e^{13}-e^{24}+ie^{14}+ie^{23})+\c{f}(e^{13}-e^{24}-ie^{14}-ie^{23})\right)\wedge e^{56},
\end{align*}
with $f$ a smooth complex valued function, and $d\alpha\ne0$. This proves our claim.
\end{example}

However, following the proof of \cite[Theorem 2.3]{DLZ} and working separately on the two spaces $H^{2n}_{g+}$ and $H^{2n}_{g-}$ introduced before, we can prove that $J$ is \lq\lq\Cp\ in degree $2n$" for these two spaces $H^{2n}_{g+}$ and $H^{2n}_{g-}$.
Set
\[
H^{2n}_{g\pm,J+}:=\{a\in H^{2n}_{dR}\,|\,a=[\alpha], *\alpha=\pm\alpha, J\alpha=+\alpha\}
\]
and
\[
H^{2n}_{g\pm,J-}:=\{a\in H^{2n}_{dR}\,|\,a=[\alpha], *\alpha=\pm\alpha, J\alpha=-\alpha\}.
\]
\begin{theorem}\label{thm 4n pure}
Let $(M^{4n},J,g)$ be a $4n$-dimensional compact almost Hermitian manifold. Then
\[
H^{2n}_{g+,J+}\cap H^{2n}_{g+,J-}=\{[0]\},
\]
and
\[
H^{2n}_{g-,J+}\cap H^{2n}_{g-,J-}=\{[0]\}.
\]
\end{theorem}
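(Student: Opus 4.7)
The plan is to run, in dimension $4n$, essentially the same self-orthogonality argument used by Draghici--Li--Zhang in dimension four, but now enforced at the top-degree level where $J$ acts trivially. Fix $[a]\in H^{2n}_{g+,J+}\cap H^{2n}_{g+,J-}$ and pick real representatives $\alpha,\beta\in A^{2n}(M)$ with $d\alpha=d\beta=0$, together with $*\alpha=\alpha$, $J\alpha=\alpha$ and $*\beta=\beta$, $J\beta=-\beta$. Write $\alpha-\beta=d\gamma$. The goal is to show $\alpha=0$.

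Two observations drive the argument. First, $\alpha$ is $d$-harmonic: from $*\alpha=\alpha$ and $d\alpha=0$ we immediately get $d*\alpha=0$, hence $d^{\ast}\alpha=\pm *d*\alpha=0$. Second, and here the hypothesis $\dim M=4n$ enters in a crucial way, the form $\alpha\wedge\beta$ vanishes pointwise. Indeed, every $4n$-form on $M^{4n}$ lies in $A^{2n,2n}_{J}$, on which $J$ acts as $i^{2n-2n}=1$; but $J$ is multiplicative on wedge products, so
\[
J(\alpha\wedge\beta)=J\alpha\wedge J\beta=-\alpha\wedge\beta,
\]
which combined with $J(\alpha\wedge\beta)=\alpha\wedge\beta$ forces $\alpha\wedge\beta=0$.

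Combining these two facts with the self-duality of $\alpha$, I compute
\[
\|\alpha\|^{2}=\int_{M}\alpha\wedge *\alpha=\int_{M}\alpha\wedge\alpha=\int_{M}\alpha\wedge\beta+\int_{M}\alpha\wedge d\gamma.
\]
The first integral vanishes by the pointwise identity above, and the second equals $\int_{M}d(\alpha\wedge\gamma)=0$ by Stokes' theorem together with $d\alpha=0$. Hence $\alpha=0$, so $[a]=0$. The argument for $H^{2n}_{g-,J+}\cap H^{2n}_{g-,J-}=\{[0]\}$ is identical, with the only change being that $*\alpha=-\alpha$ turns $\int_{M}\alpha\wedge *\alpha$ into $-\int_{M}\alpha\wedge\alpha$; the sign plays no role since we still conclude $\|\alpha\|^{2}=0$.

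I do not expect a serious obstacle. The only step that is not completely routine is the pointwise identity $\alpha\wedge\beta=0$, which is exactly the dimensional miracle afforded by working in real dimension $4n$: since $J$ acts as the identity on $A^{4n}(M)$, a $J$-invariant and a $J$-anti-invariant top form must be pointwise orthogonal in the wedge pairing. In other dimensions the analogous compatibility between the $J$-splitting and the $*$-splitting would fail, which is consistent with Example \ref{example fino tomassini} showing that $J$ need not be \Cp\ at the stage $2n$ without first restricting to the $\pm 1$ eigenspaces of $*$.
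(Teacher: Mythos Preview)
Your proof is correct and follows essentially the same approach as the paper: both arguments compute $\int_M\alpha\wedge\alpha$ in two ways---once as $\pm\|\alpha\|^2$ via (anti-)self-duality, and once as $\int_M\alpha\wedge\beta=0$ using that $[\alpha]=[\beta]$ and that a $J$-invariant form wedged with a $J$-anti-invariant form of complementary degree vanishes. Your explicit pointwise justification of $\alpha\wedge\beta=0$ via the action of $J$ on top forms is exactly what the paper compresses into the phrase ``for bidegree reasons,'' and your use of Stokes' theorem unpacks what the paper encodes by writing both integrals as the cup product $a\cup a$.
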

\begin{proof}
Let $a\in H^{2n}_{g\pm,J+}\cap H^{2n}_{g\pm,J-}$, and choose two forms $\alpha'\in A^{2n}_{g\pm}\cap A^{2n}_{J+}$ and $\alpha''\in A^{2n}_{g\pm}\cap A^{2n}_{J-}$ such that $a=[\alpha']=[\alpha'']$. Then
\[
a\cup a=\int_M\alpha'\wedge\alpha''=0
\]
for bidegree reasons, but we also have
\[
a\cup a=\int_M\alpha'\wedge\alpha'=\pm\int_M\alpha'\wedge*\alpha'=\pm\lVert\alpha'\rVert_{L^2},
\]
therefore $\alpha'=0$ and $a=[0]$.
\end{proof}

However, note that in general it might happen that
\[
H^{2n}_{g+,J+}\oplus H^{2n}_{g+,J-}\underset{\ne}{\subset}H^{2n}_{g+}
\]
and
\[
H^{2n}_{g-,J+}\oplus H^{2n}_{g-,J-}\underset{\ne}{\subset}H^{2n}_{g-}.
\]
To see this, assume on the contrary that the spaces $H^{2n}_{g\pm}$ actually decompose into these direct sums. It would follow that
\begin{align*}
H^{2n}_{dR}=H^{2n}_{g+}\oplus H^{2n}_{g-}=H^{2n}_{g+,J+}\oplus H^{2n}_{g+,J-}\oplus H^{2n}_{g-,J+}\oplus H^{2n}_{g-,J-}\subseteq H^{2n}_{J+}+H^{2n}_{J-}\subseteq H^{2n}_{dR}
\end{align*}
which implies that $J$ is \Cf\ in degree $2n$. It suffices to consider Example \ref{example fino tomassini} to have an example where this cannot happen.


\begin{thebibliography}{20}

\bibitem{ADT} D. Angella, G. Dloussky, A. Tomassini, On Bott-Chern cohomology of compact complex surfaces, {\em Ann. Mat. Pura Appl.} (4) {\bf 195} (2016), no. 1, pp. 199-217. 

\bibitem{AKS} N. Aronszajn, A. Krzywicki, J. Szarski, A unique continuation theorem for exterior differential forms on Riemannian manifolds, {\em Ark. Mat.} {\bf 4} (1962), pp. 417–453.


\bibitem{B} C. Bock, On low-dimensional solvmanifolds, {\em Asian J. Math.} {\bf 20} (2016), no. 2, pp. 199–262.

\bibitem{cattaneo-tardini-tomassini} A. Cattaneo, N. Tardini, A. Tomassini, Primitive decompositions of Dolbeault harmonic forms on compact almost-K\"ahler manifolds, \texttt{arXiv:2201.09273}, 2022.

\bibitem{CW} J. Cirici, S.O. Wilson, Topological and geometric aspects of almost K\"ahler manifolds via harmonic theory, {\em Sel. Math. New Ser.} {\bf 26}, (2020). 

\bibitem{DLZ}
T. Draghici, T.-J. Li, W.  Zhang, Symplectic forms and cohomology
decomposition of almost complex four-manifolds, {\em Int. Math.
Res. Not. IMRN} {\bf 2010} (2010), no. 1, pp. 1-17.


\bibitem{FTT} A. Fino, N. Tardini, A. Tomassini, An integral condition involving $\delbar$-harmonic $(0,1)$-forms, \emph{Bull. Sci. Math.} {\bf 183} (2023), Paper No. 103243.

\bibitem{FT} A. Fino, A. Tomassini, On some cohomological properties of almost complex manifolds, \emph{J. Geom. Anal.} {\bf 20} (2010), no. 1, pp. 107–131.

\bibitem{Ha} K. Hasegawa, Complex and Kähler structures on compact solvmanifolds, \emph{J. Symplectic Geom.} {\bf 3} (2005), pp. 749–767.

\bibitem{Hi} F. Hirzebruch, Some problems on differentiable and complex manifolds, \emph{Ann. Math. (2)} \textbf{60}, (1954), pp. 213-236.

\bibitem{Ho} T. Holt, Bott-Chern and $\delbar$ Harmonic Forms on Almost Hermitian 4-Manifolds, {\em Math. Z.}, {\bf 302} (2022), no.1, 47–72.

\bibitem{HP} T. Holt, R. Piovani, Primitive decomposition of Bott-Chern and Dolbeault harmonic $(k,k)$-forms on compact almost K\"ahler manifolds, {\tt  arXiv:2206.05919}, 2022, to appear in {\em Europ. J. Math.}.

\bibitem{HZ} T. Holt, W. Zhang, Harmonic Forms on the Kodaira-Thurston Manifold, \emph{Adv. Math.}, \textbf{400}, no. 108277, 2022.

\bibitem{HZ2} T. Holt, W. Zhang, Almost Kähler Kodaira-Spencer problem, {\em Math. Res. Lett.}, {\bf 29} (2022), 6 pp. 1685-1700.

\bibitem{LZ}
T.-J. Li, W. Zhang, Comparing tamed and compatible symplectic
  cones and cohomological properties of almost complex manifolds, {\em Comm. Anal. Geom.} \textbf{17} (2009), no. 4, pp.  651--684.
  
\bibitem{P} R. Piovani, Dolbeault harmonic $(1,1)$-forms on $4$-dimensional compact quotients of Lie groups with a left invariant almost Hermitian structure, \emph{J. Geom. Phys.} {\bf 180} (2022) 104639. 


\bibitem{piovani-tardini} R. Piovani, N. Tardini, Bott-Chern harmonic forms and primitive decompositions on compact almost K\"ahler manifolds, \emph{Ann. Mat. Pura Appl.} (2023).

\bibitem{PT4} R. Piovani, A. Tomassini, Bott-Chern Laplacian on almost Hermitian manifolds, {\em Math. Z.}, {\bf 301}, pp. 2685–2707 (2022).

\bibitem{PT5} R. Piovani, A. Tomassini, On the dimension of Dolbeault harmonic $(1,1)$-forms on almost Hermitian $4$-manifolds, \emph{Pure Appl. Math. Q.}, Vol. 18, No. 3 (2022), pp. 1187-1201.

\bibitem{TT} N. Tardini, A. Tomassini, $\delbar$-harmonic forms on 4-dimensional almost-Hermitian manifolds, {\tt arXiv:2104.10594}, 2021, to appear in {\em Math. Res. Lett.}.

\bibitem{tardini-tomassini-dim6} N. Tardini, A. Tomassini,
Almost-complex invariants of families of six-dimensional solvmanifolds, {\em Complex Manifolds} {\bf 9} (2022), pp. 238-260.

\bibitem{ugarte} L. Ugarte, Hermitian structures on six-dimensional nilmanifolds, {\em Transform. Groups} {\bf 12} (2007), no. 1, 175–202.

\bibitem{weil} A. Weil, {\em Introduction \`{a} l'\'{E}tude des Vari\'{e}t\'{e} k\"ah\'{e}leriennes}, Publications de l'Institut de Math\'{e}matique de l'Universit\'{e} de Nancago VI, Hermann, Paris (1958) 175 pp.

\bibitem{zhang-parma} W. Zhang, Almost complex Hodge theory, \emph{Rivista di Matematica della Università di Parma} {\bf 13} (2022), no. 2, 481-504.





\end{thebibliography}
\end{document}